\newcommand{\ignore}[1]{}
\newcommand{\Pb}{{\mathsf{P}}} 
\newcommand{\EV}{{\mathsf{E}}} 
\newcommand{\Eb}{{\mathsf{E}}} 
\newcommand{\PFA}{{\mathsf{PFA}}} 
\newcommand{\ADD}{{\mathsf{EDD}}} 
\newcommand{\Hyp}{{\mathsf {H}}}
\newcommand{\Var}{\mathsf{Var}}
\newcommand{\B}{{\mathsf{B}}}
\newcommand{\Fc}{{ \mathscr{F}}} 
\newcommand{\Bc}{{ \mathscr{B}}}
\newcommand{\Pc}{{ \mathscr{P}}}
\newcommand{\mrm}[1]{\mathrm{#1}}
\newcommand{\drm}{{\mrm{d}}}
\newcommand{\F}{{\mrm{F}}}
\newcommand{\mb}[1]{\mathbf{#1}} 
\newcommand{\Yb}{\mb{Y}}
\newcommand{\Xb}{{\mb{X}}}
\newcommand{\pb}{{\mathbf{p}}}
\def\CP{{\mb{CP}}}
\def\e{{\mb e}}
\newcommand{\mbs}[1]{\bm{#1}} 
\newcommand{\teb}{{\mbs{\theta}}}
\newcommand{\mbb}[1]{\mathbb{#1}} 
\newcommand{\Rbb}{\mbb{R}} 
\newcommand{\Zbb}{\mbb{Z}} 
\newcommand{\ind}[1]{\mathbbm{1}_{\{#1\}}}     
\newcommand{\Ind}[1]{\mathbbm{1}_{\{#1\}}}     
\newcommand{\class}{{\mbb{C}}}
\newcommand{\classalpi}{{\mbb{C}_\pi}(\alpha)}
\def\bbr{{\mathbb R}}
\newcommand{\Nbb}{\mathbb{N}}
\newcommand{\mc}[1]{\mathcal{#1}} 
\newcommand{\Nc}{{\mc{N}}}
\newcommand{\Ac}{{\mc{A}}}
\newcommand{\Mc}{{\mc{M}}}
\newcommand{\Lc}{{\mc{L}}}
\newcommand{\Kc}{{\mc{K}}}
\newcommand{\wtT}{{\widetilde{T}}}
\newcommand{\wtX}{\widetilde{X}}
\newcommand{\wtS}{\widetilde{S}}
\newcommand{\wt}{\widetilde}
\newcommand{\xra}{\xrightarrow} 
\newcommand{\abs}[1]{\left\vert#1\right\vert}
\newcommand{\set}[1]{\left\{#1\right\}}
\newcommand{\brc}[1]{\left(#1\right)}
\newcommand{\brcs}[1]{\left[#1\right]}
\newcommand{\floor}[1]{\left\lfloor #1 \right\rfloor}
\newcommand{\at}[1]{\textcolor{red}{#1}}
\begin{document}

\title*{Quickest Changepoint Detection in General Multistream Stochastic Models: Recent Results, Applications and Future Challenges}
\titlerunning{Quickest Changepoint Detection  in General Stochastic Models} 
\author{Alexander G Tartakovsky and Valentin Spivak}
\institute{Alexander G Tartakovsky \at AGT StatConsult, Los Angeles, California, USA, \email{alexg.tartakovsky@gmail.com}
\and Valentin Spivak \at MIPT, Russia, \email{valyas.spivak@gmail.com}}
%
%
\maketitle

\abstract{Modern information systems generate large volumes of data with anomalies that occur at unknown points in time and have to be detected 
 quickly and reliably with low false alarm rates.  The paper develops a general theory of quickest multistream detection in non-i.i.d. stochastic models when
 a change may occur in a set of multiple data streams.  The first part of the paper focuses on the asymptotic quickest detection theory. 
 Nearly optimal pointwise detection strategies that minimize the expected detection 
 delay are proposed and analyzed when the false alarm rate is low. The general theory is illustrated in several examples. 
 In the second part, we discuss challenging applications associated with the rapid detection of new  COVID waves and 
 the appearance of near-Earth space objects.  Finally, we discuss certain open problems and future challenges.
}

%
\section{Introduction} \label{sec:intromultiple}

The problem of  changepoint detection in multiple data streams (sensors, populations, or multichannel systems) arises in numerous applications that include but are not limited to the medical sphere (detection of an 
epidemic present in only a fraction of hospitals \cite{chang, frisen-sqa09, bock,tsui-iiet12});  
environmental monitoring (detection of the presence of hazardous materials or intruders \cite{fie, mad}); 
military defense (detection of an unknown number of targets by multichannel sensor systems 
\cite{Bakutetal-book63,Tartakovsky&Brown-IEEEAES08}); near-Earth space informatics (detection of debris and satellites with telescopes
 \cite{BerenkovTarKol_EnT2020,KolessaTartakovskyetal-IEEEAES2020,TartakovskyetalIEEESP2021}); 
cyber security (detection of attacks in computer networks 
\cite{szor, Tartakovsky-Cybersecurity14, Tartakovskyetal-SM06,Tartakovskyetal-IEEESP06}); detection of malicious activity in social 
networks~\cite{Raghavanetal-AoAS2013,Raghavanetal-IEEECSS2014}, to name a few.

In many change detection applications, the pre-change  (the baseline or in-control) distribution of observed data is known, but the post-change (out-of-control) 
distribution is not completely known. As discussed in \cite[Ch 3, 6]{Tartakovsky_book2020} there are 
three conventional approaches in this case: (i)  to select a representative value of the post-change parameter and apply efficient detection rules tuned to this value such as the 
Shiryaev rule, the Shiryaev--Roberts rule or CUSUM, (ii) to select a mixing measure over the parameter space and apply mixture-type rules, 
(iii) to estimate the parameter and apply adaptive schemes. In Chapters~4 and 6 of  \cite{Tartakovsky_book2020}, a single stream case was considered. 
In this paper, we consider a more general case where the change occurs in multiple data streams and the number and location of affected 
data streams are unknown.

To be more specific, suppose there are $N$ data streams observed sequentially in time subject to a change at an unknown point in time $\nu \ge 0$, 
so that the data up to the time $\nu$ are generated by one stochastic model and after $\nu+1$ by another model. 
The change in distributions may occur at a subset of streams of a size $1 \le K \le N$, where $K$ is an assumed maximal number of streams that can be affected, which can be substantially smaller than $N$. 
A sequential detection procedure is a stopping time $T$ with respect to an observed sequence.   
A false alarm is raised when the detection is declared before the change occurs. 
One wants to detect the change with as small a delay as possible while controlling the false alarm rate. 

We consider a fairly general stochastic model assuming that the observations may be dependent and non-identically distributed (non-i.i.d.) before and 
after the change and that streams may be mutually dependent. 

In the case of  i.i.d. observations (in pre-change and post-change modes with different distributions), this problem was considered in
\cite{felsokIEEEIT2016,Mei-B2010,TartakovskyIEEECDC05,TNB_book2014,Tartakovskyetal-SM06,Xie&Siegmund-AS13}. 
Specifically, in the case of a known post-change parameter and $K=1$ (i.e., when only one stream can be affected but it is unknown which one), Tartakovsky~\cite{TartakovskyIEEECDC05} 
proposed to use a multi-chart CUSUM procedure that raises an alarm when one of the partial CUSUM statistics exceeds a threshold. This procedure is very simple, 
but it is not optimal and performs poorly when many data streams are affected. To avoid this drawback, Mei~\cite{Mei-B2010} suggested a SUM-CUSUM rule based on the sum of CUSUM 
statistics in streams and evaluated its first-order performance, which shows that this detection scheme is first-order asymptotically minimax minimizing the maximal expected delay to detection
when the average run length to false alarm approaches infinity. Fellouris and Sokolov~\cite{felsokIEEEIT2016} suggested more efficient generalized and mixture-based CUSUM rules 
that are second-order minimax.  Xie and Siegmund~\cite{Xie&Siegmund-AS13} considered a particular Gaussian model with an unknown post-change mean. 
They suggested a rule that combines mixture likelihood ratios that incorporate an assumption about the proportion of affected data streams with the generalized 
CUSUM statistics in streams and then add up the resulting local statistics. They also performed a detailed asymptotic analysis of the proposed
detection procedure in terms of the average run length to a false alarm and the expected delay as well as MC simulations. 
Chan~\cite{Chan-AS2017} studied a version of the 
mixture likelihood ratio rule for detecting a change in the mean of the normal population assuming independence of data streams and establishing its asymptotic optimality  in a minimax setting as well as 
dependence of operating characteristics on the fraction of affected streams.

In the present paper, we consider a Bayesian problem with a general prior distribution of the change point and multiple data streams 
with an unknown pattern, i.e., when the size and location of the affected streams are unknown. 
It is assumed that the observations can be dependent and non-identically distributed in data streams and even across the streams. Furthermore,
in contrast to most previous publications where asymptotically stationary models were considered, we consider substantially non-stationary models, 
even asymptotically.
We address two scenarios when the pre- and post-change distributions are completely known and also a parametric uncertainty 
when the post-change distribution is known up to an unknown parameter. We introduce mixture detection procedures that 
mix the Shiryaev--Robers-type statistic over the distributions of the unknown pattern and unknown post-change parameter (in the case of prior uncertainty).
The resulting statistics are then compared to appropriate thresholds.  

The paper is organized as follows. In Section~\ref{sec:ASQuickest}, 
we present a general theory for very general stochastic models, providing sufficient conditions under which the suggested detection procedures 
are first-order asymptotically optimal.  In Section~\ref{sec:Examples}, we provide illustrative examples. In Section~\ref{sec:MC}, 
we evaluate the performance of proposed mixture detection procedures using Monte Carlo simulations for the non-stationary Gaussian model.
In Section~\ref{sec:COVID}, theoretical results are applied to rapid detection of the COVID-19 outbreak in Australia based on monitoring the percentage of infections 
in the total population
as well as to rapid detection and extraction of faint near-Earth space objects with telescopes.
Section~\ref{sec:Remarks} concludes the paper with several remarks, including future research challenges.

\section{Asymptotic Theory of Multistream Quickest Change Detection for General Non-i.i.d.\ Models} \label{sec:ASQuickest}

In this section, we develop the quickest detection theory in the general multistream non-i.i.d. scenario. We design mixture-based change detection procedures
which are nearly optimal in the class of change detection procedures with the prespecified average (weighted) probability of false alarm when 
this probability is small, assuming that the change point is random with a given prior distribution.

\subsection{A General Multistream Model and Basic Notations}\label{ssec:Model}

We begin with a preliminary description of the scenario of interest and general notation.

Consider the multistream scenario where the observations $\Xb=(X(1), \ldots, X(N))$ are sequentially acquired in $N$ streams, i.e.,
in the $i$-th stream one observes a sequence $X(i)=\{X_n(i)\}_{n \ge1}$, where $i \in \Nc:=\{1, \ldots,N\}$. 
The observations are subject to a change at an unknown 
time $\nu\in\{0,1, 2, \dots\}$,  so that $X_1(i),\dots,X_\nu(i)$ are generated by one stochastic model and $X_{\nu+1}(i),  X_{\nu+2}(i), \dots$ 
by another model when the change occurs in the $i$-th stream.  The change in distributions happens at a subset of streams 
$\B\subseteq \{1,\dots,N\}$ with cardinality $1\le |\B| \le K \le N$, where $K$ is an assumed maximal number of streams that can be affected, 
which can be and often is substantially smaller than $N$. 
A sequential detection rule is a stopping time $T$ with respect to an observed sequence 
$\{\Xb_n\}_{n\ge 1}$, $\Xb_n=(X_n(1),\dots,X_n(N))$, i.e., $T$ is an integer-valued random variable, such that the event $\{T = n\}$
belongs to the sigma-algebra  $\Fc_{n}=\sigma(\Xb_1,\dots,\Xb_n)$ generated by observations $\Xb_1,\dots,\Xb_n$.  

The observations may have a very general stochastic structure. Specifically, if we let $\Xb^{n}(i)=(X_1(i),\dots,X_n(i))$ denote the sample of size $n$ 
in the $i$-th stream and if
$\{f_{\theta_i,n}(X_n(i)|\Xb^{n-1}(i))\}_{n\ge 1}$, $\theta_i\in\Theta_i$ is a parametric family of conditional densities of $X_n(i)$ 
given $\Xb^{n-1}(i)$, then when $\nu=\infty$ (there is no change) the parameter $\theta_i$ is equal to the known value $\theta_{i,0}$, i.e., 
$f_{\theta_i,n}(X_n(i)|\Xb^{n-1}(i))=f_{\theta_{i,0},n}(X_n(i)|\Xb^{n-1}(i))$ for all 
$n \ge 1$ and when $\nu=k<\infty$, then $\theta_i=\theta_{i,1}\neq \theta_{i,0}$, i.e., 
$f_{\theta_i,n}(X_n(i)|\Xb^{n-1}(i))=f_{\theta_{i,0},n}(X_n(i)|\Xb^{n-1}(i))$ for $n \le k$ and 
$f_{\theta_i,n}(X_n(i)|\Xb^{n-1}(i))=f_{\theta_{i,1},n}(X_{n}(i)|\Xb^{n-1}(i))$ for $n > k$. Not only the point of change $\nu$, but also the subset $\B$, 
its size $|\B|$, and the post-change parameters  $\theta_{i,1}$ are unknown. For further details with a certain change of notation see below.

 Let $\Pb_\infty$ denote the probability measure corresponding to the 
sequence of observations $\{\Xb_n\}_{n\ge 1}$  from all $N$ streams when there is never a change ($\nu=\infty$) in any of the components and,  
for $k=0,1,\dots$ and $\B \subset \Nc$, 
let $\Pb_{k,\B}$ denote the measure  corresponding to the sequence  $\{\Xb_n\}_{n\ge 1}$ when $\nu=k<\infty$ and the change occurs in a subset $\B$ of the set $\Pc$ 
(i.e., $X_{\nu+1}(i)$, $i\in \B$ is the first post-change observation). By $\Hyp_\infty: \nu=\infty$ we denote the hypothesis that the change never occurs and by $\Hyp_{k, \B}$ -- the hypothesis
that the change occurs at time $0 \le k<\infty$ is the subset of streams $\B\subset \Pc$. 
The set $\Pc$ is a  class of subsets of $\Nc$ that incorporates available prior information regarding the subset $\B$ where the change may occur.    For example,  
in applications frequently it is known that at most $K$ streams can be affected, in which case $\Pc=\Pc_K =\{\B \subset \Nc : 1 \le |\B| \le K\}$.
Hereafter $|\B|$ denotes the size of a subset $\B$ (the number of affected streams under $\Hyp_{k, \B}$) and $|\Pc|$ denotes the size of class 
$\Pc$ (the number of possible alternatives  in $\Pc$). Note that $|\Pc|$ takes maximum value when there is no prior information regarding the 
subset of affected streams, i.e., when  $\Pc=\Pc_{N}$, in which case $|\Pc|=2^{N}-1$.

The problem is to detect the change as soon as possible after it occurs regardless of the subset $\B$, i.e., we are interested in detecting the event $\cup_{\B \in \Pc} \Hyp_{k,\B}$ that the change has occurred
in some subset but not in identifying the subset of streams where it occurs.

Write $\Xb^{n}(i)=(X_1(i),\dots,X_n(i))$ for the concatenation of the first $n$ observations from the $i$-th data stream and $\Xb^{n}=(\Xb_1,\dots,\Xb_n)$ 
for the concatenation of the first $n$ observations from all $N$ data streams. Let $\{g(\Xb_n|\Xb^{n-1})\}_{n\in \Zbb_+}$ and $\{f_{\B}(\Xb_{n}|\Xb^{n-1})\}_{n\in \Zbb_+}$ 
be sequences of conditional densities of $\Xb_n$  given $\Xb^{n-1}$, which may depend on $n$, i.e., $g=g_n$ and $f_\B=f_{\B,n}$.  
We omit the subscript $n$ for the sake of brevity. For the general non-i.i.d.\  changepoint model the joint density $p(\Xb^n | \Hyp_{k,\B})$ 
under hypothesis $\Hyp_{k,\B}$ can be written as follows
\begin{equation} \label{noniidmodelpost}
p(\Xb^n | \Hyp_{k,\B})  =
\begin{cases}
 \prod_{t=1}^n g(\Xb_t|\Xb^{t-1}) \quad  & \text{for}~~ \nu=k \ge n ,
\\
\prod_{t=1}^{k}  g(\Xb_t|\Xb^{t-1}) \times \prod_{t=k+1}^{n}  f_{\B}(\Xb_t|\Xb^{t-1})  \quad & \text{for}~~ \nu=k < n,
\end{cases}
\end{equation}
where $\B\subset \Pc$. Therefore, $g(\Xb_{n}|\Xb^{n-1})$ is the pre-change conditional density and $f_{\B}(\Xb_{n}|\Xb^{n-1})$ 
is the post-change conditional density given that the change occurs in the subset $\B$.

In most practical applications, the post-change distribution is not completely known -- it depends on an unknown (generally multidimensional) parameter 
$\theta\in\Theta$,  so that the model \eqref{noniidmodelpost} may be treated only as a benchmark for a more practical case where the post-change densities 
$f_{\B}(\Xb_t|\Xb^{t-1})$ are replaced by $f_{\B,\theta}(\Xb_t|\Xb^{t-1})$, i.e.,
 \begin{align}
p(\Xb^n | H_{k,\B},\theta) & =  \prod_{t=1}^{k}  g(\Xb_t|\Xb^{t-1}) \times \prod_{t=k+1}^{n}  f_{\B,\theta}(\Xb_t|\Xb^{t-1})  \quad \text{for}~~ \nu=k < n.
\label{noniidmodelpostunknown}
\end{align}

Notice that the probabilistic models given by \eqref{noniidmodelpost} and \eqref{noniidmodelpostunknown} are very general and do not assume that
the data streams are mutually independent.

\subsection{Optimality Criterion}\label{ssec:Problem}

In the sequel, we assume that the change point $\nu$ is a random variable independent of the observations with a prior distribution 
$\pi_k=\Pb(\nu=k)$, $k=0,1,2,\dots$ with $\pi_k >0$ for $k\in\{0,1,2, \dots\}=\Zbb_+$ and that a change point may take negative values,
 but the detailed structure of the distribution $\Pb(\nu=k)$ for $k=-1,-2,\dots$ is not important. Only the total probability $\pi_{-1}=\Pb(\nu \le -1)$ 
 of the change being in effect before the observations become available matters. 

Let $\EV_{k,\B,\theta}$ and $\EV_\infty$ denote expectations under $\Pb_{k,\B,\theta}$ and $\Pb_\infty$, respectively, where $\Pb_{k,\B,\theta}$ corresponds to 
model \eqref{noniidmodelpostunknown} with an unknown parameter $\theta\in\Theta$. Define the probability measure on the Borel $\sigma$-algebra $\Bc$
in $\Rbb^\infty\times\Nbb$ as
\[
\Pb^\pi_{\B,\theta} (\Ac\times \mc{K})=\sum_{k\in \mc{K}}\,\pi_{k} \Pb_{k,\B,\theta}\left(\Ac\right), ~~\Ac\in \Bc(\Rbb^\infty), ~~ \Kc \in \Nbb.
\]  
Under measure $\Pb^\pi_{\B,\theta}$ the change point $\nu$ has distribution $\pi=\{\pi_k\}$ and the model for the observations
is of the form \eqref{noniidmodelpostunknown}, i.e., $\Xb(t)$ has conditional density $g(\Xb(t)| \Xb^{t-1})$ if $\nu \le k$ and conditional density 
$f_{\B,\theta}(\Xb(t)| \Xb^{t-1})$ if $\nu > k$ and the change occurs in the subset $\B$ with the parameter $\theta$. Let $\Eb^\pi_{\B,\theta}$ denote the expectation
under $\Pb^\pi_{\B,\theta}$.

For the prior distribution of the change point $\pi=\{\pi_k\}_{k \ge -1}$, introduce the average (weighted) probability of false alarm associated with the 
change detection procedure~$T$ 
\begin{equation} \label{PFAdefmultiple}
\PFA_\pi(T)=\Pb^\pi_{\B,\theta}( T \le \nu)= \sum_{k=0}^\infty \pi_k \Pb_\infty( T \le k) 
\end{equation}
that corresponds to the risk due to a false alarm.  Note that here we took into account 
that $\Pb_{k,\B,\theta}(T \le k) = \Pb_\infty(T\le k)$ since the event $\{T \le k\}$ depends on the observations $\Xb_1,\dots, \Xb_k$ generated by the 
pre-change probability measure $\Pb_\infty$ (recall that by our convention $\Xb_k$ is the last pre-change observation if $\nu=k$).

For $\nu=k \in \Zbb_+$, $\B\in\Pc$, and $\theta\in\Theta$ the risk associated with the detection delay is measured by the conditional expected delay to detection 
\begin{equation} \label{Riskdefmultiple}
\ADD_{k,\B,\theta}(T)=  \EV_{k, \B,\theta}\left[T-k\,|\, T> k \right] .
\end{equation}
Note that if the change occurs before the observations become available, i.e., $k \in \{-1, -2, \dots\}$, then  $\ADD_{k,\B,\theta}(T)=
\EV_{k, \B,\theta}[T] \equiv \EV_{0, \B,\theta}[T]$ since $T\ge 0$ with probability one.

Next, for the prior distribution $\pi$ and $\alpha\in (0,1)$, define the Bayesian class of changepoint detection procedures with the weighed 
probability of false alarm $\PFA_\pi(T) = \Pb^\pi_{\B,\theta}( T \le \nu)$ not greater than a prescribed number 
$\alpha$:
\begin{equation}\label{class}
\classalpi=\{T \in \Mc: \PFA_\pi(T) \le \alpha\} = \set{T\in\Mc: \sum_{k=0}^\infty \pi_k \Pb_\infty( T \le k)  \le \alpha},
\end{equation}
where $\Mc$ stands for the totality of Markov times.

In this section, we are interested  in the uniform Bayesian constrained optimization criterion
\begin{equation*}
\inf_{\{T: \PFA_\pi(T) \le \alpha\}}\,\ADD_{k,\B,\theta}(T)  \quad \text{for all} ~ k\in \Zbb_+, ~ \B\in \Pc~\text{and} ~ \theta\in\Theta.
\end{equation*}
However, this problem is intractable for arbitrary values of $\alpha\in (0, 1)$. For this reason, we will consider the following first-order 
asymptotic problem assuming that the given PFA $\alpha$ approaches zero: Find a change detection procedure $T^*$ such that it minimizes the 
expected detection delay $\ADD_{k, \B,\theta}(T)$ asymptotically to first order as $\alpha\to 0$ uniformly for all possible values of
$k\in \Zbb_+$, subsets $\B\in \Pc$, and $\theta\in\Theta$. That is, our goal it to design such detection procedure $T^*$ that, as $\alpha\to0$,
\begin{equation}\label{FOAOunifdef}
\inf_{T\in\classalpi}\ADD_{\B,\theta}(T) = \ADD_{k, \B,\theta}(T^*) (1+o(1)) ~~ 
\forall ~ \B\in \Pc,~ \theta\in\Theta, ~  \nu=k\in \Zbb_+,
\end{equation} 
where $\classalpi$ is the class of detection procedures for which the PFA does not exceed a prescribed number 
$\alpha \in (0,1)$ defined in \eqref{class} and $o(1)\to 0$ as $\alpha\to 0$.

\subsection{Multistream Changepoint Detection Procedures}\label{ssec:Proceduresunknown}

We begin by considering the most general scenario where the observations across streams are dependent.

 \subsubsection{Parametric Prior Uncertainty} \label{Uknownpar}
 
Let $\Lc_{\B,\theta}(n) = f_{\B,\theta}(\Xb_n|\Xb^{n-1})/g(\Xb_n|\Xb^{n-1})$. Note that in the general non-i.i.d.\ case the statistic 
$\Lc_{\B,\theta}(n)= \Lc_{\B,\theta}^{(k)}(n)$ depends on the change point $\nu=k$ since the post-change density  
$ f_{\B,\theta}(\Xb_n|\Xb^{n-1})= f_{\B,\theta}^{(k)}(\Xb_n|\Xb^{n-1})$ may depend on $k$.
The likelihood ratio (LR) of the hypothesis $\Hyp_{k, \B}$ that the change occurs at $\nu=k$ in the subset of streams $\B$  
against the no-change hypothesis $\Hyp_\infty$ based on the sample $\Xb^n=(\Xb_1,\dots,\Xb_n)$ is given by the product
\[
LR_{\B,\theta}(k, n) = \prod_{t=k+1}^{n}  \Lc_{\B,\theta}(t), \quad n > k
\]
and we set $LR_{\B,\theta}(k,n)=1$ for $n \le k$. 
 
 For $\B \in \Pc$ and $\theta\in\Theta$, define the generalized Shiryaev--Roberts (SR) statistic 
\begin{equation}\label{SR_stat}
\begin{split}
R_{\B,\theta}(n) =r \, LR_{\B,\theta}(0, n) + \sum_{k=0}^{n-1} LR_{\B,\theta}(k, n) , \quad n \ge 1
\end{split}
\end{equation}
with the initial condition (non-negative head-start) $R_{\B,\theta}(0)=r$, $r \ge 0$. 

Now, introduce the probability mass function (mixing measure)
\begin{equation}\label{PriorB}
\pb = \set{p_\B, \B \in \Pc}, \quad p_\B >0 ~ \forall ~ \B \in \Pc, \quad \sum_{\B \in \Pc} p_\B =1,
\end{equation}
where $p_\B$ is the prior probability of the change being in effect on the set of streams $\B$, and also the mixing probability measure
\begin{equation}\label{PriorW}
\mathbf{W}=\{W(\theta), \theta\in \Theta\}, \quad \int_{\Theta} \rm{d} W(\theta) =1.
\end{equation}

For a fixed value of $\theta$, introduce the mixture statistic
\begin{equation}\label{MSR_stat}
\begin{split}
R_{\pb,\theta}(n)& = \sum_{\B \in \Pc} p_\B R_{\B,\theta}(n)
\\
& = r  \Lambda_{\pb,\theta}(0,n) +  \sum_{k=0}^{n-1}  \Lambda_{\pb,\theta}(k,n),  ~~ n \ge 1, ~~ R_{\pb,\theta}(0)=r ,
 \end{split}
\end{equation}
where
\begin{equation}\label{MLR}
\Lambda_{\pb,\theta}(k,n) =  \sum_{\B \in \Pc} p_\B  LR_{\B,\theta}(k, n)
\end{equation}
is the mixture LR.

As discussed in \cite{Tartakovsky_book2020,TNB_book2014}, when the parameter $\theta$ is unknown there are two main conventional approaches -- 
either to estimate $\theta$ (say maximize) 
or average (mix) over  $\theta$.  Using the mixing measure  (prior distribution) $\mathbf{W}=\{W(\theta), \theta\in \Theta\}$ given in \eqref{PriorW}, 
define the double LR-mixture 
\begin{equation}\label{ALRmultiple} 
\Lambda_{\pb,W}(k,n)  =  \int_\Theta \sum_{\B \in \Pc} p_\B  LR_{\B,\theta}(k, n)  \, \mrm{d} W(\theta)
 = \int_\Theta \Lambda_{\pb,\theta}(k,n)  \, \mrm{d} W(\theta), \quad k < n
\end{equation}
and the double-mixture statistic 
\begin{equation}\label{DMSR_stat}
\begin{split}
R_{\pb,W}(n) & = \int_\Theta \sum_{\B \in \Pc} p_\B R_{\B,\theta}(n) \, \mrm{d} W(\theta)
\\
 & = r  \Lambda_{\pb,W}(0,n) +  \sum_{k=0}^{n-1}  \Lambda_{\pb,W}(k,n),  ~~ n \ge 1, ~~ R_{\pb,W}(0)= r
 \end{split}
\end{equation}
(with a non-negative head-start $r$).

The corresponding double-mixture LR-based detection procedure is given by the stopping rule which is the first time $n\ge 1$ such that the  statistic $R_{\pb,W}(n)$ exceeds the level $A>0$:
\begin{equation}\label{DMSR_def}
T_A^{\pb,W}=\inf\set{n \ge 1: R_{\pb,W}(n) \ge A}.
\end{equation}

The main result of our theory is that this changepoint detection procedure is first-order asymptotically optimal under certain very 
general conditions if threshold $A=A_\alpha$ is adequately selected, i.e., asymptotic equality
 \eqref{FOAOunifdef} holds with $T^*=T_A^{\pb,W}$.

 \subsubsection{Known Parameters  of the Post-Change Distribution}\label{Knownpar}
 
 If the value of the post-change parameter $\theta$ is known or its putative value is of special interest,
representing a nominal change, then it is reasonable to turn the double-mixture procedure
$T_A^{\pb,W}$ in single-mixture procedure $T_A^{\pb,\theta}$ by taking the degenerate weight function $W(\vartheta)$ concentrated at $\vartheta=\theta$, i.e.,  
\begin{equation}\label{MS_def}
T_A^{\pb,\theta}=\inf\set{n \ge 1: R_{\pb,\theta}(n) \ge A},
\end{equation}
and ask whether or not it has first-order asymptotic optimality properties at the point $\theta$, i.e., that under certain conditions 
asymptotic formula \eqref{FOAOunifdef} holds for $T^*=T_A^{\pb, \theta}$.
 
\subsection{Asymptotic Optimality of Mixture-Based Detection Procedures}\label{ssec:ASOPT}

\subsubsection{Basic Conditions}\label{sssec:Conditions}

While we consider a general prior and a very general stochastic model for the observations in streams and between streams, to study asymptotic optimality properties we still 
need to impose certain constraints on the prior distribution $\pi=\{\pi_k\}$ and on the general stochastic model \eqref{noniidmodelpost} that guarantee 
asymptotic stability of the detection statistics as the sample size increases. 

Regarding the prior distribution, we will assume that the following condition holds: 
\[
\lim_{n\to \infty} \frac{1}{n}  \abs{\log \Pb(\nu > n)} = \beta ~~ \text{for some} ~ \beta \ge 0.
\]
However, this condition being quite general does not cover the case where $\beta$ is positive but may go to zero. Indeed, the distributions with an exponential right tail that satisfy this condition with $\beta>0$ do not converge as $\beta\to0$ to heavy-tailed distributions with $\beta=0$. For this reason, 
any assertions for heavy-tailed distributions with $\beta=0$ do not hold if $\beta\to 0$ with an arbitrary rate; the rate must be matched with the 
PFA probability $\alpha$, $\alpha\to 0$. Hence, in what follows we consider the scenario with the prior distribution 
$\pi^\alpha=\{\pi_k^\alpha\}$ that depends on the PFA constraint $\alpha$ and modify the above condition as

\noindent $\CP_1$. For some $\beta_\alpha \ge 0$ such that $\beta_\alpha \to 0$ as $\alpha\to 0$
\[
\lim_{n\to\infty} \frac{1}{n} \abs{\log \sum_{k=n+1}^\infty \pi_k^\alpha} = \beta_\alpha.
\]

\ignore{
In addition, we will assume the following two conditions:

\noindent $\CP_2$. $\lim_{\alpha\to0} |\log \pi_k^\alpha|/|\log \alpha| = 0$ for all $k \in \Zbb_+$;

\noindent $\CP_3$. If $\beta_\alpha >0$ for all $\alpha$, then $\beta_\alpha$ goes to 0 at such rate that
\[
\lim_{\alpha\to0} \frac{\sum_{k=0}^\infty\pi_k^\alpha |\log \pi_k^\alpha|}{|\log \alpha|} =0.
\]
 }
 
For $\B \in \Pc$ and $\theta\in\Theta$, introduce the log-likelihood ratio (LLR) process between the hypotheses 
$\Hyp_{k,\B,\theta}$ ($k=0,1, \dots$) and $\Hyp_\infty$:
$$
\lambda_{\B,\theta}(k, n) = \sum_{t=k+1}^{n}\, \log \frac{f_{\B,\theta}(\Xb_t | \Xb^{t-1})}{g(\Xb_t|\Xb^{t-1})}, \quad n >k 
$$
($\lambda_{\B,\theta}(k, n)=0$ for $n \le k$). 

Let $\psi: \Rbb_+ \to \Rbb_+$ be an increasing and continuous function and by $\Psi$ denote the inverse of $\psi$, 
which is also increasing and continuous. We also assume that $\lim_{x\to\infty} \psi(x) = \infty$, and thus, $\Psi(x)$ is properly defined on the 
entire positive real line. 

In the rest of the paper, we assume that the strong law of large numbers (SLLN) holds for the LLR with the rate $\psi(n)$, that is, $\lambda_{\B,\theta}(k, k+n)/\psi(n)$
converges almost surely (a.s.) under probability $\Pb_{k,\B,\theta}$ to a finite and positive number $I_{\B,\theta}$:
\begin{equation}\label{SLLNLLR}
\frac{1}{\psi(n)} \lambda_{\B,\theta}(k, k+n) \xra[n\to\infty]{\Pb_{k,\B,\theta}-\text{a.s.}} I_{\B,\theta} ~~ \text{for all}~ k\in \Zbb_+, ~ \B\in \Pc, ~ 
\theta\in \Theta.
\end{equation}
Condition \eqref{SLLNLLR}  is the first main assumption regarding the general stochastic 
model for observed data. 

Notice that if the data in streams and across the streams are independent (but non-stationary), then
\[
I_{\B,\theta}  = \lim_{n\to\infty} \frac{1}{\psi(n)} \sum_{i \in \B} \sum_{t=k+1}^{k+n} \Eb_{k, i, \theta_i}\brcs{\frac{f_{i,\theta_i}(X_t(i)}{g_{i,t}(X_t(i))}},
\]
where $g_{i,t}(X_t(i))$ and $f_{i,\theta_i}(X_t(i))$ are pre- and post-change densities of the $t$-th observation in the $i$-th stream, respectively. 
In other words, the number $I_{\B,\theta}$ can be interpreted as the limiting local Kullback-Leibler divergence. 

If the function $\psi(n)$ is non-linear we will say that the model is {\it non-stationary (even asymptotically)}. However, in many applications the observations are 
non-identically distributed but $\psi(n)=n$, in which case we will say that the non-i.i.d.\ model is {\it asymptotically stationary}.

In Subsection~\ref{sssec:LowerBounds}, we establish the asymptotic lower bound (as $\alpha\to0$) for the minimal value of the expected detection 
delay $\ADD_{k,\B,\theta}(T)$ in class $\class_\pi(\alpha)$ whenever the almost sure convergence condition \eqref{SLLNLLR} holds. 
To obtain the lower bound it suffices to require the following right-tail condition
\[
\lim_{L\to\infty} \Pb_{k,\B,\theta} \set{\frac{1}{\psi(L)} \max_{1\le n \le L} \lambda_{\B,\theta}(k,k+n) \ge (1+\varepsilon) I_{\B,\theta}} =0 ~~ 
\text{for all}~ \varepsilon >0.
\]
This condition holds whenever the SLLN \eqref{SLLNLLR} takes place.
However, the SLLN is not sufficient to show that this lower bound is attained for the mixture detection procedures \eqref{DMSR_def} and \eqref{MS_def}. 
To this end, the SLLN should be strengthened into a complete convergence version.

\begin{definition}\label{def:Completeconv}
We say that the process $\{Y_{k, n}, n \ge 1\}$, $ k \in \Zbb_+$ converges to a random variable $Y$  uniformly completely under the measure $\Pb_{k}$ if
\[
\sum_{n=1}^\infty \sup_{k\in \Zbb_+} \Pb_{k}\set{ \abs{Y_{k,n} -Y} > \varepsilon}< \infty ~~ \text{for all}~ \varepsilon >0.
\]
\end{definition}

The proof of the upper bound presented in Subsection~\ref{sssec:AODMS} shows that if along with the SLLN \eqref{SLLNLLR} the following left-tail 
condition is satisfied 
\begin{equation}\label{LeftComplete}
\sum_{n=1}^\infty \sup_{k\in \Zbb_+} \Pb_{k,\B, \theta}\set{\frac{1}{\psi(n)}\lambda_{\B,\theta}(k, k+n) < I_{\B,\theta}  - \varepsilon} < \infty \quad \text{for all} ~ 
\varepsilon >0,
\end{equation}
where the ``information number'' $I_{\B,\theta}$ is positive and finite, then the detection procedure  \eqref{MS_def} is asymptotically optimal
for the fixed (prespecified) post-change parameter $\theta$. 

Obviously, both conditions \eqref{SLLNLLR} and \eqref{LeftComplete} are satisfied whenever there exist positive and finite
numbers $I_{\B,\theta}$ ($\B \in \Pc$, $\theta\in \Theta$) such that the normalized LLR $\lambda_{\B,\theta}(k,k+n)/\psi(n) \to  I_{\B,\theta}$
uniformly completely under $\Pb_{k, \B,\theta}$-probability. Therefore, this condition turns out to be sufficient for asymptotic optimality of 
the detection procedure  \eqref{MS_def} when the post-change parameter $\theta$ is known. In the case of the unknown post-change parameter,
the left-tail condition is similar to but more sophisticated than condition \eqref{LeftComplete}. The details will be given in Subsection~\ref{sssec:AODMSR}.

\subsubsection{Heuristic Argument} \label{sssub:Heuristics}

We begin with a heuristic argument that allows us to obtain approximations for the expected detection delay when the threshold in  
the mixture detection procedure $T_A^{\pb,W}$ is large. For the sake of simplicity, the head-start in the detection statistic
$R_{\pb,W}(n)$ is set to zero, $R_{\pb,W}(n)=r=0$. This argument also explains the reason why the condition $\CP_1$ on the prior 
distribution is imposed. 

Assume first that the change occurs at $\nu=k\le0$. It is easy to see that the logarithm of the statistic $R_{\pb,W}(n)$ can be written as
\[
\log R_{\pb,W}(n) = \lambda_{\pb,W}(0,n) + Y_n  ,
\]
 where 
 \[
 Y_n = \log \brcs{\sum_{t=1}^{n-1}\frac{\Lambda_{\pb,W}(t,n)}{\Lambda_{\pb,W}(0,n)}}.
 \]
Due to the SLLN \eqref{SLLNLLR} $\lambda_{\pb,W}(n)/\psi(n)$ converges almost surely as $n \to \infty$ under 
$\Pb_{0,\B,\theta}$ to $I_{\B,\theta}$, so we can expect that for a large $n$ 
\begin{equation}\label{LLRapprox}
 \lambda_{\pb,W}(0,n) \approx I_{\B,\theta} \, \psi(n)  + \xi_n, 
\end{equation}
where $\xi_n/\psi(n)$ converges  to $0$. Also, $Y_n$, $n\ge 1$ are ``slowly changing'' 
and converge to a random variable $Y_\infty$.  Thus, ignoring the overshoot of $\log R_{\pb,W}(T_A^{\pb,W})$ over $\log A$, we obtain
\begin{equation} \label{approxR}
\log A \approx \log R_{\pb,W}(T_A^{\pb,W}) \approx  I_{\B,\theta} \psi\brc{T_A^{\pb,W}} +\xi_{T_A^{\pb,W}} + Y_\infty .
\end{equation}
If $\psi(n)$ increases sufficiently fast, at least not slower than $n$, then the last two terms can be ignored, and hence,
\[
T_A^{\pb,W} \approx \Psi\brc{\frac{\log A}{I_{\B,\theta}}}.
\]
Taking expectation yields
\begin{equation*}
\Eb_{0,\B,\theta}[T_A^{\pb,W}] \approx \Psi\brc{\frac{\log A}{I_{\B,\theta}}}.
\end{equation*}
A similar argument leads to the following approximate formula (for a large $A$) for the expected delay $\Eb_{k,\B,\theta}[(T_A^{\pb,W}-k)^+]$ when the change occurs at $\nu=k$:
\begin{equation}\label{Expapprox}
\Eb_{k,\B,\theta}[(T_A^{\pb,W}-k)^+] \approx \Psi\brc{\frac{\log A}{I_{\B,\theta}}}, \quad k \in \Zbb_+.
\end{equation}

Next, in Lemma~\ref{Lem:PFADSR}  (see Subsection~\ref{sssec: PFA}) it is established that the probability of a false alarm of the procedure 
$T_A^{\pb,W}$ satisfies the inequality
\[
\PFA_\pi(T_A^{\pb,W}) \le \bar{\nu}_A/A,
\] 
 where $\bar{\nu}_A=\sum_{k=1}^\infty k \pi_k^A$ is the mean of the prior distribution $\pi^A$. If we assume that $\bar{\nu}_A/A \to 0$ as $A\to \infty$, 
 then this inequality along with approximate equality \eqref{Expapprox} yields the following approximation for the expected delay to detection:
\begin{equation}\label{ADDapprox}
\Eb_{k,\B,\theta}[T_A^{\pb,W}-k | T_A^{\pb,W}>k] = \frac{\Eb_{k,\theta}[(T_A^W-k)^+]}{1-\PFA_\pi(T_A^{\pb,W})} 
\approx \Psi\brc{\frac{\log A}{I_{\B,\theta}}} .
\end{equation}
In subsequent sections, this approximation is justified rigorously.

There are two key points we would like to address. The first one is that if we impose condition $\CP_1$ on the prior distribution of the change point with $\beta>0$ 
that does not depend on $\alpha$ and does not converge to $0$, then the lower bound for the expected delay to detection in class $\classalpi$
has the form
\begin{equation} \label{LBbeta}
\inf_{T\in\classalpi} \ADD_{k,\B,\theta} \ge \Psi\brc{\frac{|\log \alpha|}{I_{\B,\theta}+\beta}} (1+o(1)) \quad \text{as}~ \alpha\to 0.
\end{equation}
In this case, this lower bound is not attained by the procedure $T_A^{\pb,W}$ since if we take $A=A_\alpha\sim |\log\alpha|$, then it follows from
\eqref{ADDapprox} that
\[
\Eb_{k,\B,\theta}[T_A^{\pb,W}-k | T_A^{\pb,W}>k] \sim \Psi\brc{\frac{|\log \alpha|}{I_{\B,\theta}}} > \Psi\brc{\frac{|\log \alpha|}{I_{\B,\theta}+\beta}}.
\]
Hereafter we use a conventional notation $y_a\sim z_a$ as $a\to a_0$ if $\lim_{a\to a_0} (y_a/z_a) = 1$.
This can be expected since the detection statistic $R_{\pb,W}(n)$ is based on the uniform prior on a positive half line. However, if $\beta=\beta_\alpha\to 0$,
as we assumed in condition $\CP_1$, then the lower bound is
\[
\Psi\brc{\frac{|\log \alpha|}{I_{\B,\theta}}} (1+o(1))
\]
and it is attained by $T_A^{\pb,W}$.

Yet another key point is that to obtain the lower bound \eqref{LBbeta} with $\beta=\beta_\alpha \to 0$ but $\beta_\alpha>0$, 
as shown in the proof of Theorem~\ref{Th:LB}, we need the function $\psi(x)$ to be either linear or super-linear. Indeed, if now we define the statistic
\begin{equation*}\label{DMS_stat}
S_{\pb,W}^{\pi}(n)= \frac{1}{\Pb(\nu \ge n)} \brcs{\pi_{-1}  \Lambda_{\pb,W}(0,n) + \sum_{k=0}^{n-1} \pi_k \Lambda_{\pb,W}(k,n) } 
\end{equation*}
and the corresponding  detection procedure 
\begin{equation*}
\wtT_A^{\pb,W}=\inf\set{n \ge 1: S_{\pb,W}^\pi(n) \ge A},
\end{equation*}
then instead of \eqref{approxR} we have
\[
\log A \approx \log S_{\pb,W}(\wtT_A^{\pb,W}) \approx  \beta \wtT_A^{\pb,W} +I_{\B,\theta} \psi\brc{\wtT_A^{\pb,W}} +\xi_{\wtT_A^{\pb,W}} + Y_\infty .
\]
So if $\psi(x) <x$ is sub-linear, i.e., $\Psi(n)\gg n$ for large $n$, we expect that the prior distribution gives much more contribution than the 
observations and
\begin{equation*}
\Eb_{k,\B,\theta}[\wtT_A^{\pb,W} -k \vert \wtT_A^{\pb,W}>k ] \approx \Psi\brc{\frac{\log A}{\beta}}
\end{equation*}
as long as $\beta>0$, i.e., the prior distribution has an exponential right tail.

Despite the simplicity of the basic ideas and the approximate calculations, the rigorous argument in proving these results is rather tedious.

\subsubsection{Asymptotic Lower Bound for Expected Detection Delay}\label{sssec:LowerBounds}

For establishing asymptotic optimality of changepoint detection procedures, we first obtain, under the a.s.\ convergence condition \eqref{SLLNLLR}, 
the asymptotic lower bound for expected detection delay  
$\ADD_{k, \B,\theta}(T) = \Eb_{k, \B,\theta}\brcs{T-k| T >k}$ of any detection rule $T$ from class $\classalpi$. 
In the following subsections, we show that under certain additional conditions associated with complete convergence of LLR
these bounds are attained for the mixture procedures $T_{A}^{\pb,W}$ and $T_A^{\pb,\theta}$. 

For $\varepsilon\in(0,1)$ and $\delta>0$, define 
\begin{equation}\label{Ndef}
N_{\alpha}=N_{\alpha}(\varepsilon,\delta, \B,\theta)=  \Psi\brc{\frac{(1-\varepsilon)|\log\alpha|}{I_{\B,\theta}+\beta_\alpha+\delta}}.
\end{equation}

The following theorem specifies the asymptotic lower bound for the expected detection delay. 

\begin{theorem}\label{Th:LB}
Let the prior distribution of the change point satisfy condition $\CP_1$ and
assume that for some positive and finite numbers $I_{\B,\theta}$ ($\B\in\Pc$, $\theta\in \Theta$) the a.s.\ convergence condition 
\eqref{SLLNLLR} holds.  Suppose in addition that the function $\psi(x)$ increases not slower than $x$, i.e.,
\begin{equation}\label{Condpsi}
\psi(x) \ge x, \quad x >0 .
\end{equation}
Then for all $0<\varepsilon<1$ and $\delta >0$ 
\begin{equation}\label{Pksupclasszero}
 \lim_{\alpha\to0} \sup_{ T\in \classalpi}\Pb_{k, \B,\theta}\set{k <  T \le k+N_{\alpha}(\varepsilon,\delta, \B,\theta)} =0 ~~ \text{for all}~\B\in\Pc, \theta\in\Theta,
\end{equation}
and, as a result,  for all $\nu=k \in \Zbb_+$, $\B\in\Pc$, and $\theta\in\Theta$ 
\begin{equation}\label{LBkinclass}
\inf_{T\in\classalpi}  \ADD_{k,\B,\theta}(T) \ge \Psi\brc{\frac{|\log\alpha|}{I_{\B,\theta}}} (1+o(1)) ~~\text{as}~ \alpha\to 0.
\end{equation}
\end{theorem}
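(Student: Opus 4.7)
The plan is to first establish the probability bound \eqref{Pksupclasszero} by a likelihood-ratio change of measure, and then convert it into the expected-delay bound \eqref{LBkinclass} via Chebyshev-type reasoning. The central identity is $\Pb_{k,\B,\theta}(A)=\Eb_\infty[e^{\lambda_{\B,\theta}(k,n)}\ind{A}]$ for $A\in\Fc_n$, $n\ge k$, which transfers a post-change probability (where $T$ may be short) into a pre-change one, where the PFA constraint bites.

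Fix $\varepsilon,\delta>0$, abbreviate $L_\alpha=N_\alpha(\varepsilon,\delta,\B,\theta)$, and set the LLR threshold $C_\alpha=(1+\varepsilon')I_{\B,\theta}\psi(L_\alpha)$ with $\varepsilon'<\delta/I_{\B,\theta}$ to be chosen. For any $T\in\classalpi$, I would split the target probability according to whether the running LLR supremum exceeds $C_\alpha$ to obtain
\[
\Pb_{k,\B,\theta}\{k<T\le k+L_\alpha\}\le \Pb_{k,\B,\theta}\Bl\{\max_{1\le n\le L_\alpha}\lambda_{\B,\theta}(k,k+n)\ge C_\alpha\Br\}+e^{C_\alpha}\Pb_\infty\{T\le k+L_\alpha\}.
\]
The first summand tends to $0$ as $\alpha\to 0$ (hence $L_\alpha\to\infty$) by the right-tail property implied by the SLLN \eqref{SLLNLLR}. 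For the second summand, combining the PFA chain $\alpha\ge \Pb_\infty\{T\le k+L_\alpha\}\sum_{j\ge k+L_\alpha}\pi_j^\alpha$ with condition $\CP_1$ (which yields $\sum_{j\ge M}\pi_j^\alpha=e^{-\beta_\alpha M(1+o(1))}$) gives
\[
e^{C_\alpha}\Pb_\infty\{T\le k+L_\alpha\}\le\alpha\,\exp\bl\{(1+\varepsilon')I_{\B,\theta}\psi(L_\alpha)+\beta_\alpha(k+L_\alpha)(1+o(1))\br\}.
\]
The assumption $\psi(x)\ge x$ gives $L_\alpha\le\psi(L_\alpha)=(1-\varepsilon)|\log\alpha|/(I_{\B,\theta}+\beta_\alpha+\delta)$, so the exponent is at most a strictly negative multiple of $|\log\alpha|$, making the whole term $o(1)$ and yielding \eqref{Pksupclasszero}.

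From \eqref{Pksupclasszero} the expected-delay bound follows by
\[
\ADD_{k,\B,\theta}(T)\ge L_\alpha\cdot\frac{\Pb_{k,\B,\theta}(T>k)-\Pb_{k,\B,\theta}\{k<T\le k+L_\alpha\}}{\Pb_{k,\B,\theta}(T>k)},
\]
where $\Pb_{k,\B,\theta}(T>k)=1-\Pb_\infty(T\le k)\to 1$ via the crude PFA bound $\Pb_\infty(T\le k)\le\alpha/\pi_k^\alpha\to 0$ (for fixed $k$, assuming $\pi_k^\alpha$ does not decay as fast as $\alpha$) and the numerator correction vanishes uniformly by \eqref{Pksupclasszero}. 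Sending $\varepsilon,\delta\downarrow 0$ and using $\beta_\alpha\to 0$ together with continuity of $\Psi$ then delivers \eqref{LBkinclass}.

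The main obstacle will be the exponent calibration in the second summand: the prior tail contributes $e^{\beta_\alpha L_\alpha}$ while the LLR exponential moment contributes $e^{I_{\B,\theta}\psi(L_\alpha)}$, and the hypothesis $\psi(x)\ge x$ is exactly what prevents the former from swamping the latter when $\beta_\alpha>0$. Without at least linear $\psi$, the heuristic in Subsection~\ref{sssub:Heuristics} shows that the prior-driven scaling $\Psi(|\log\alpha|/\beta_\alpha)$ would dominate, producing the strictly smaller bound \eqref{LBbeta} rather than \eqref{LBkinclass}.
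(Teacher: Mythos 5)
Your proposal is correct and follows essentially the same route as the paper's proof: the same change-of-measure decomposition with the LLR truncated at $(1+\varepsilon)I_{\B,\theta}\psi(N_\alpha)$ (you merely present the Wald identity in the reverse direction), the same use of the PFA constraint together with $\CP_1$ and $\psi(x)\ge x$ to kill the $e^{C_\alpha}\Pb_\infty\{T\le k+N_\alpha\}$ term, and the same Markov-inequality conversion to the expected-delay bound with $\varepsilon,\delta\downarrow 0$ at the end. The only cosmetic differences (your cruder bound $\Pb_\infty(T\le k)\le\alpha/\pi_k^\alpha$ in place of $\alpha/\Pi_{k-1}^\alpha$, and writing the prior tail as $e^{-\beta_\alpha M(1+o(1))}$, which should carry a $+\,\delta M$ slack to cover the $\beta_\alpha=0$ case) do not affect the argument.
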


\begin{proof}
In the asymptotically stationary case where 
$\psi(n)=n$,  the methodology of the proof is analogous to that used by 
Tartakovsky \cite{TartakovskyIEEEIT2017,TartakovskyIEEEIT2019} in the proofs of the 
lower bounds in a single stream change detection problem with slightly different assumptions on the
prior distribution.  A generalization of the proof in the substantially non-stationary case has several technical 
details. We present complete proof. 

To begin, consider an arbitrary stopping time $T\in\classalpi$ and note that by Markov's inequality
\begin{align*}
 \ADD_{k,\B,\theta}(T)  & \ge  \Eb_{k, \B,\theta}\brcs{(T-k)^+} \ge N_\alpha \, \Pb_{k,\B,\theta}\set{(T-k)^+ > N_\alpha} 
 \\
 & =  N_\alpha \, \Pb_{k,\B,\theta}\set{T > k+ N_\alpha}  .
\end{align*}
If assertion \eqref{Pksupclasszero} holds, then
\begin{equation}\label{Pkinf0}
\lim_{\alpha\to0} \inf_{T\in\classalpi} \Pb_{k,\B,\theta}\set{T > k+N_\alpha} = 1.
\end{equation}
Indeed,
\begin{align*}
\Pb_{k,\B,\theta}\set{T > k+ N_\alpha} = \Pb_{\infty}\set{T > k} - \Pb_{k,\B,\theta}\set{k< T  \le  k+ N_\alpha},
\end{align*}
where we used the identity $\Pb_{k, \B,\theta}( T>k)=\Pb_\infty (T>k)$. Next, since for any change detection procedure $T\in\classalpi$,
\[
\alpha \ge \sum_{i=k}^\infty \pi_i^\alpha \Pb_\infty(T \le i) \ge \Pb_\infty(T \le k) \sum_{i=k}^\infty \pi_i^\alpha ,
\]
it follows that
\begin{equation}\label{Psup}
\inf_{T\in\classalpi} \Pb_\infty(T > k) \ge 1- \alpha/\Pi_{k-1}^\alpha, \quad k \in \Zbb_+,
\end{equation}
where $\Pi_{\ell}^\alpha = \Pb(\nu> \ell)=\sum_{k=\ell+1}^\infty \pi_k^\alpha$. Hence, we obtain
\[
\inf_{T\in \classalpi} \Pb_{k,\B,\theta}\set{T > k+ N_\alpha} \ge 1- \alpha/ \Pi_{k-1}^\alpha - \sup_{ T\in \classalpi}\Pb_{k, \B,\theta}(k <  T \le k+N_{\alpha}).
\]
This inequality yields \eqref{Pkinf0}, and we obtain the asymptotic inequality
\[
 \ADD_{k,\B,\theta}(T) \ge N_\alpha (1+o(1)) ~~ \text{as}~ \alpha\to 0,
\]
 which holds for arbitrary values of $\varepsilon \in (0,1)$ and $\delta> 0$. By our assumption, the function 
 $N_\alpha=N_{\alpha}(\varepsilon,\delta, \B,\theta)$ is continuous, so we may take a limit $\delta, \varepsilon \to 0$, which implies
 inequality \eqref{LBkinclass}. Consequently, to show the validity of asymptotic  inequality \eqref{LBkinclass}, it suffices to prove the equality 
 \eqref{Pksupclasszero}.  
 
 Let 
 \[
\lambda^*_{\B, \theta}(t) = \log \frac{f_{\B,\theta}(\Xb_t | \Xb^{t-1})}{g(\Xb_t|\Xb^{t-1})}
 \]
 and notice that for $n>k$
 \[
 \frac{\drm \Pb_\infty^{(n)}}{\drm \Pb_{k,\B,\theta}^{(n)}} = \frac{1}{LR_{\B,\theta}(k, n)} = \exp\set{-\sum_{t=k+1}^n \lambda^*_{\B,\theta}(t) },
 \]
where hereafter $\Pb^{(n)}$ denotes a restriction of the measure $\Pb$ to the sigma-algebra $\Fc_n$. Therefore, changing the measure 
$\Pb_\infty \to \Pb_{k, \B,\theta}$ and using Wald's likelihood ratio identity, we obtain for any $C >0$:
\begin{align*}
& \Pb_{\infty}\set{k < T \le k + N_\alpha} =   \EV_{k,\B,\theta}\brcs{{\ind{0 < T -k \le N_\alpha}  \frac{\drm \Pb_\infty^{(T)}}{\drm \Pb_{k,\B,\theta}^{(T)}}  }}
\\
&= \EV_{k,\B,\theta}\brcs{{\ind{0 < T -k \le N_\alpha} e^{- \sum_{t=k+1}^T \lambda^*_{\B,\theta}(t) } }}
\\
& \ge \EV_{k,\B,\theta}\brcs{{\ind{0 < T -k \le N_\alpha, \sum_{t=k+1}^T \lambda^*_{\B,\theta}(t) < C} e^{- \sum_{t=k+1}^T \lambda^*_{\B,\theta}(t)}}}
\\
&\ge e^{-C} \Pb_{k,\B,\theta}\set{0 < T -k \le N_\alpha, \max_{0 < n-k \le  N_\alpha }\sum_{t=k+1}^n \lambda^*_{\B,\theta}(t) < C}
\\
&\ge e^{-C} \brcs{\Pb_{k,\B,\theta}\set{0 < T -k \le N_\alpha}  - 
\Pb_{k,\B,\theta}\set{\max_{0 \le n <  N_\alpha }\sum_{t=k+1}^{k+n+1} \lambda^*_{\B,\theta}(t) \ge C}} ,
\end{align*} 
where the last inequality follows from the fact that $\Pr({\cal A} \cap {\cal B}) = \Pr({\cal A}) - \Pr({\cal B}^c)$ for any events ${\cal A}$ and ${\cal B}$, where 
${\cal B}^c$ is the complement event of ${\cal B}$. Setting 
\[
C=\psi(N_\alpha) I_{\B,\theta}(1+\varepsilon) = (1+\varepsilon)K_\alpha, \quad 
K_\alpha= \frac{(1-\varepsilon) |\log\alpha| I_{\B,\theta}}{I_{\B,\theta}+\beta_\alpha+\delta} 
\]
yields
\begin{equation}\label{Probnuh}
\Pb_{k,\B,\theta}\set{k < T \le k + N_\alpha} \le \kappa_{k,\alpha}(T) + \sup_{k \ge 0} \gamma_{k,\alpha},
\end{equation}
where
\[
\kappa_{k,\alpha}(T) =\kappa_{k,\alpha}(\varepsilon, \delta, \theta, \B, T)  = e^{(1+\varepsilon) K_\alpha} \Pb_{\infty}\set{0 < T -k \le N_\alpha}
\]
and
\[
\gamma_{k,\alpha}= \gamma_{k,\alpha}(\varepsilon,\B,\theta) = 
\Pb_{k,\B,\theta}\set{\max_{0 \le n <  N_\alpha }\sum_{t=k+1}^{k+n+1} \lambda^*_{\B,\theta}(t) \ge (1+\varepsilon) I_{\B,\theta}\psi(N_\alpha)}.
\]
 By the a.s. convergence condition \eqref{SLLNLLR}, 
\begin{equation}\label{subbetato0}
\lim_{\alpha\to 0} \sup_{k \ge 0} \gamma_{k,\alpha} = 0.
\end{equation}

To evaluate $\kappa_{k,\alpha}(T)$  it suffices to note that, by condition $\CP_1$, for all sufficiently small $\alpha$, there exists a small $\delta$ such that
\[
\frac{|\log \Pi^\alpha_{k-1+N_\alpha}|}{ k-1+N_{\alpha}} \le \beta_\alpha + \delta,
\]
and to use inequality  \eqref{Psup}, which yields that for all sufficiently small $\alpha$
\begin{align*} 
\kappa_{k,\alpha}( T)  & \le e^{(1+\varepsilon) K_\alpha}  \Pb_\infty( T \le k+N_{\alpha}) 
\le \alpha \, e^{(1+\varepsilon)K_\alpha}/\Pi^\alpha_{k-1+N_\alpha} 
\\
& \le \exp\set{(1+\varepsilon)K_\alpha -|\log\alpha| +( k-1+N_{\alpha}) \frac{|\log\Pi^\alpha_{k-1+N_\alpha}|}{ k-1+N_{\alpha}}} 
\\
& \le  \exp\set{(1+\varepsilon)K_\alpha-|\log\alpha| +( k-1+N_{\alpha}) (\beta_\alpha+\delta)} .
\end{align*}
Since by condition \eqref{Condpsi}, 
\[ 
N_\alpha \le K_\alpha/I_{\B,\theta}=\frac{(1-\varepsilon)|\log\alpha|}{I_{\B,\theta}+\beta_\alpha+\delta}
\]
it follows that
\begin{align*} 
\kappa_{k,\alpha}( T)  & \le 
\exp\set{-\frac{I_{\B,\theta}\varepsilon^2 + (\beta_\alpha+\delta) \varepsilon}{I_{\B,\theta}+\beta_\alpha + \delta}|\log\alpha| + 
(\beta_\alpha +\delta) k}
\\
& \le \exp\set{- \varepsilon^2 |\log\alpha| + (\beta_\alpha+\delta)k} := \overline{\kappa}_{\alpha,k}(\varepsilon,\delta).
\end{align*}
for all $\varepsilon \in (0,1)$ and every small $\delta$. Hence,  for all $\varepsilon \in (0,1)$ and sufficiently small $\delta$,
\begin{equation} \label{IneqU}
\kappa_{k,\alpha}(T)  \le  \overline{\kappa}_{k,\alpha}(\varepsilon,\delta) ,
\end{equation}
where $\overline{\kappa}_{k,\alpha}(\varepsilon,\delta)$ does not depend on the stopping time $T$ and goes to $0$ as $\alpha\to0$ 
for any fixed $k\in \Zbb_+$, which implies that 
\begin{equation*}
\sup_{T\in\classalpi} \kappa_{k,\alpha}( T) \to 0 \quad \text{as}~ \alpha\to0 ~~ \text{for every fixed}~ k \in\Zbb_+.
\end{equation*}
 The proof of the lower bound \eqref{LBkinclass} is complete.   \qed 
\end{proof}

\subsubsection{Probabilities of False Alarm of Mixture Change Detection Procedures}\label{sssec: PFA}

An important question is how to select threshold $A$ in change detection procedures $T_A^{\pb,\theta}$ and $T_A^{\pb,W}$ defined in 
\eqref{MS_def} and \eqref{DMSR_def}, respectively, to imbed them into class $\classalpi$.

The following lemma provides the upper bounds for the PFA of these procedures.

\begin{lemma}\label{Lem:PFADSR}  
For all $A >0$ and any prior distribution $\pi=\{\pi_k\}$ of the change point $\nu$ with finite mean $\bar\nu=\sum_{k=1}^\infty k \pi_k$, the weighted probabilities of false alarms of 
the detection procedures $T_A^{\pb,\theta}$  and  $T_{A}^{\pb,W}$ satisfy the inequalities 
\begin{align}
\PFA_\pi(T_A^{\pb,\theta}) & \le \frac{r b + \bar{\nu}} {A} ,
\label{PFAMSRineq}
\\
\PFA_\pi(T_A^{\pb,W})  & \le \frac{r b + \bar{\nu}} {A} ,
\label{PFADMSRineq}
\end{align}
where $b=\sum_{k=1}^\infty \pi_k$. Hence,  if $A= A_\alpha =   (r b+\bar\nu)/\alpha$,
then $\PFA_\pi(T_{A_\alpha}^{\pb,\theta}) \le \alpha$ and  $\PFA_\pi(T_{A_\alpha}^{\pb,W}) \le \alpha$, i.e., 
$T_{A_\alpha}^{\pb,\theta}\in \classalpi$ and $T_{A_\alpha}^{\pb,W} \in \classalpi$.  
\end{lemma}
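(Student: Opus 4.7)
The plan is to exploit the fact that $R_{\pb,\theta}(n)-n$ and $R_{\pb,W}(n)-n$ are $(\Fc_n,\Pb_\infty)$-martingales starting at $r$, and then combine Doob's optional sampling with a simple Markov-type bound and summation against the prior.

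First, I would verify the martingale property. For fixed $\B\in\Pc$ and $\theta\in\Theta$, the conditional likelihood-ratio increment satisfies $\Eb_\infty[\Lc_{\B,\theta}(n+1)\mid\Fc_n]=1$, and the recursion $R_{\B,\theta}(n+1)=(1+R_{\B,\theta}(n))\,\Lc_{\B,\theta}(n+1)$ immediately gives $\Eb_\infty[R_{\B,\theta}(n+1)-R_{\B,\theta}(n)\mid\Fc_n]=1$. Averaging against $\pb$ and against $W$ preserves this drift-$1$ identity because $\sum_{\B\in\Pc}p_\B=1$ and $\int_\Theta \drm W(\theta)=1$; interchanging conditional expectation with the finite sum and with the integral is legitimate by nonnegativity (Tonelli). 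Hence both $M^{(1)}_n:=R_{\pb,\theta}(n)-n$ and $M^{(2)}_n:=R_{\pb,W}(n)-n$ are $\Pb_\infty$-martingales with $M^{(i)}_0=r$.

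Second, for any integer $k\ge 1$ the random variable $T_A^{\pb,W}\wedge k$ is a bounded stopping time, so optional sampling yields
\begin{equation*}
\Eb_\infty\bigl[R_{\pb,W}(T_A^{\pb,W}\wedge k)\bigr] \;=\; r+\Eb_\infty[T_A^{\pb,W}\wedge k] \;\le\; r+k.
\end{equation*}
Splitting the expectation on the left over $\{T_A^{\pb,W}\le k\}$, on which $R_{\pb,W}(T_A^{\pb,W})\ge A$ by definition of the stopping rule, and over $\{T_A^{\pb,W}>k\}$, on which $R_{\pb,W}(k)\ge 0$, gives the key Markov-type bound $A\cdot\Pb_\infty(T_A^{\pb,W}\le k)\le r+k$, i.e.\ $\Pb_\infty(T_A^{\pb,W}\le k)\le (r+k)/A$.

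Third, since $T_A^{\pb,W}\ge 1$, the $k=0$ term in the PFA sum is zero, and therefore
\begin{equation*}
\PFA_\pi(T_A^{\pb,W}) \;=\; \sum_{k=1}^\infty \pi_k\,\Pb_\infty(T_A^{\pb,W}\le k) \;\le\; \sum_{k=1}^\infty \pi_k\,\frac{r+k}{A} \;=\; \frac{rb+\bar\nu}{A},
\end{equation*}
which is \eqref{PFADMSRineq}; the derivation of \eqref{PFAMSRineq} is identical after replacing $W$ by the degenerate measure at $\theta$. Setting $A=A_\alpha=(rb+\bar\nu)/\alpha$ then makes both bounds equal to $\alpha$, placing $T_{A_\alpha}^{\pb,\theta}$ and $T_{A_\alpha}^{\pb,W}$ in $\classalpi$. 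I do not anticipate any serious obstacle; the only step that warrants care is the martingale verification, where one must invoke Tonelli to commute $\Eb_\infty[\cdot\mid\Fc_n]$ with the finite sum $\sum_{\B}p_\B$ and the mixing integral $\int\,\drm W(\theta)$.
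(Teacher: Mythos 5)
Your overall route is the same as the paper's: establish that under $\Pb_\infty$ the statistic $R_{\pb,\theta}(n)$ (and likewise $R_{\pb,W}(n)$) has unit drift, so $\Eb_\infty[R_{\pb,\theta}(n)]=r+n$, deduce $\Pb_\infty(T_A\le k)\le (r+k)/A$, and sum against the prior. The only cosmetic difference is that you get the probability bound by optional sampling of the compensated martingale $R_{\pb,W}(n)-n$ at the bounded stopping time $T_A^{\pb,W}\wedge k$, whereas the paper applies Doob's maximal submartingale inequality to the nonnegative submartingale $R_{\pb,\theta}(n)$ itself; both give exactly $(r+k)/A$, so this buys nothing and costs nothing.

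The one step you should repair is the verification of the drift: you derive it from the recursion $R_{\B,\theta}(n+1)=(1+R_{\B,\theta}(n))\,\Lc_{\B,\theta}(n+1)$, but in the paper's general non-i.i.d.\ model the likelihood-ratio increment $\Lc_{\B,\theta}(n)=\Lc_{\B,\theta}^{(k)}(n)$ may depend on the hypothesized change point $k$ (the post-change conditional density $f_{\B,\theta}^{(k)}$ may depend on $k$), and the SR-type statistic then admits no such recursion — the paper explicitly remarks on this in its concluding section. The conclusion you want is nonetheless true and is obtained directly from the definition \eqref{SR_stat}: conditioning on $\Fc_{n-1}$, each term $LR_{\B,\theta}(k,n)$ with $k\le n-2$ (and the head-start term) reduces to $LR_{\B,\theta}(k,n-1)$ because $\Eb_\infty[\Lc_{\B,\theta}^{(k)}(n)\mid\Fc_{n-1}]=1$ for every fixed $k$, while the newly added term $LR_{\B,\theta}(n-1,n)$ contributes $1$ in conditional expectation; hence $\Eb_\infty[R_{\B,\theta}(n)\mid\Fc_{n-1}]=1+R_{\B,\theta}(n-1)$, and averaging over $\pb$ and $W$ (Tonelli, as you note) preserves this. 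With that substitution your argument is complete and matches the paper's.
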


\begin{proof}
We have  
\begin{align*}
\Eb_\infty[R_{\pb,\theta}(n)| \Fc_{n-1}]   =\sum_{\B\in\Pc} p_\B +\sum_{\B\in\Pc} p_\B R_{\B,\theta}(n-1)
= 1+ R_{\pb,\theta}(n-1), 
\end{align*}
and hence,  the statistic $\{R_{\pb,\theta}(n)\}_{n\in\Zbb_+}$ is a non-negative $(\Pb_\infty,\Fc_n)$-submartingale  with mean  
\[
\Eb_\infty [R_{\pb,\theta}(n)]=r+n, \quad n \in \Zbb_+.
\]
By Doob's maximal submartingale inequality, for any $ k \ge 1$,
\begin{equation}\label{PFADoobineq}
 \Pb_\infty(T_A^{\pb,\theta} \le k) = \Pb_\infty\set{\max_{0 \le n \le k} R_{\pb,\theta}(n) \ge A} \le \frac{\Eb_\infty[R_{\pb,\theta}(k)]}{A} 
 =\frac{r+k}{A} ,
 \end{equation}
which implies 
\[
\sum_{k=1}^\infty \pi_k \Pb_\infty(T_A^{\pb,\theta} \le k) \le \frac{r \sum_{k=1}^\infty \pi_k+\sum_{k=1}^\infty k \pi_k }{A},
\]
and inequality \eqref{PFAMSRineq} follows.

To prove inequality \eqref{PFADMSRineq} it suffices to note that the double mixture statistic  $\{R_{\pb,W}(n)\}_{n\ge 1}$ is also
a $(\Pb_\infty,\Fc_n)$-submartingale  with mean $\Eb_\infty [R_{\pb,W}(n)=r+n$ since
\begin{align*}
\Eb_\infty[R_{\pb,W}(n)| \Fc_{n-1}] & =   \Eb_\infty \brcs{\int_\Theta R_{\pb,\theta}(n)  \mrm{d} W(\theta) | \Fc_{n-1}} 
= 
\int_\Theta  \Eb_\infty  \brcs{R_{\pb,\theta}(n)  | \Fc_{n-1}}  \mrm{d} W(\theta) 
\\
&= 1+  \int_\Theta R_{\pb,\theta}(n-1)   \mrm{d} W(\theta)
= 1+ R_{\pb,W}(n-1).
\end{align*}
Applying Doob's submartingale inequality yields
\begin{equation}\label{PFADoobineq2}
 \Pb_\infty(T_A^{\pb,W} \le k) \le (r+k)/A, \quad k=1,2,\dots ,
 \end{equation}
which implies  \eqref{PFADMSRineq} and the proof is complete.\qed
\end{proof}

\subsubsection{Asymptotic Optimality of Mixture Detection Procedure $T_A^{\pb,\theta}$ for Known Post-change Parameter }\label{sssec:AODMS} 

We now proceed with establishing asymptotic optimality properties of the mixture detection procedures $T_A^{\pb,\theta}$ in 
class $\classalpi$ as $\alpha\to0$ assuming that the post-change parameter $\theta$ is prespecified (known or selected).

Throughout this subsection, we assume that there exist positive and finite numbers $I_{\B,\theta}$ ($\B \in \Pc$, $\theta\in \Theta$) 
such that the normalized LLR $\lambda_{\B,\theta}(k,k+n)/\psi(n)$ converges as $n\to\infty$ to  $I_{\B,\theta}$
uniformly completely under probability measure $\Pb_{k, \B,\theta}$, i.e.,
\begin{equation}\label{LLRunifcomplete}
\sum_{n=1}^\infty \sup_{k\in\Zbb_+}\Pb_{k, \B,\theta}\set{\abs{\frac{\lambda_{\B,\theta}(k,k+n)}{\psi(n)} - I_{\B,\theta}}> \varepsilon} < \infty 
\quad \text{for all}~ \varepsilon >0 .
\end{equation}

\paragraph{\bf \ref{sssec:AODMS}(a) Asymptotic Operating Characteristics of Detection Procedure $T_A^{\pb,\theta}$ for Large Threshold Values}\label{sssec:OperChar}

The following theorem provides asymptotic operating characteristics of the mixture detection procedure $T_A^{\pb,\theta}$ for large values of 
threshold $A$. Since no constraints are imposed on the false alarm rate (FAR) -- neither on the PFA nor any other FAR measure -- 
this result is universal and can be used in a variety of optimization problems.  We need to impose some constraints on the behavior of the head-start $r_A$, 
which may depend on $A$ and approach infinity as $A\to\infty$ in such a way that
\begin{equation}\label{rA}
\lim_{A\to \infty} (r_A \, A^{-\varepsilon}) = 0 \quad \text{for any}~ \varepsilon >0.
\end{equation} 
We also need to assume that the function $\psi(x)$ increases not too slowly, at least faster than the logarithmic function. Specifically, 
we assume the following condition on the inverse function $\Psi(x)$
\begin{equation}\label{CondPsi}
\lim_{x\to\infty} \frac{\log \Psi(x)}{x} =0.
\end{equation}

\begin{theorem}\label{Th:AOCMSR} 
Suppose that conditions \eqref{rA} and \eqref{CondPsi} hold and there exist positive and finite numbers 
$I_{\B,\theta}$, $\B\in\Pc$, $\theta\in\Theta$, such that 
the uniform complete convergence condition \eqref{LLRunifcomplete} is satisfied. Then the following asymptotic as $A\to\infty$ approximation holds
\begin{equation} \label{ADDMSR}
 \ADD_{k,\B,\theta}(T_A^{\pb, \theta}) = \Psi\brc{\frac{\log A}{I_{\B,\theta}}} (1+o(1))\quad  \text{for all}~ k \in \Zbb_+, ~ \B\in\Pc.
\end{equation}
\end{theorem}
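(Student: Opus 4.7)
My plan is to establish matching upper and lower bounds for $\Eb_{k,\B,\theta}[(T_A^{\pb,\theta}-k)^+]$, and then convert to $\ADD$ by observing that the denominator $\Pb_\infty(T_A^{\pb,\theta}>k) = 1- \Pb_\infty(T_A^{\pb,\theta}\le k)$ tends to one as $A\to\infty$ by the Doob submartingale bound \eqref{PFADoobineq} in Lemma~\ref{Lem:PFADSR}, using the head‑start condition \eqref{rA}.

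\textbf{Upper bound.} The crucial algebraic observation is that $R_{\pb,\theta}(n)\ge p_\B\, LR_{\B,\theta}(k,n)$ for every $n>k$ and every $\B\in\Pc$, since the double sum in \eqref{SR_stat}--\eqref{MSR_stat} is over nonnegative terms and contains the pair $(\B,k)$. Hence $(T_A^{\pb,\theta}-k)^+\le \tau_A$, where $\tau_A=\inf\{m\ge 1:\lambda_{\B,\theta}(k,k+m)\ge L_A\}$ and $L_A=\log A-\log p_\B$. Fix $\varepsilon\in(0,1)$ and set $M_A=\lceil \Psi(L_A/[I_{\B,\theta}(1-\varepsilon)])\rceil$. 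For $n\ge M_A$ one has $\psi(n)I_{\B,\theta}(1-\varepsilon)\ge L_A$, so
\[
\Pb_{k,\B,\theta}\{\tau_A>n\}\le \Pb_{k,\B,\theta}\set{\frac{\lambda_{\B,\theta}(k,k+n)}{\psi(n)}<I_{\B,\theta}(1-\varepsilon)}.
\]
Summing over $n$ via $\Eb[\tau_A]=\sum_n\Pb\{\tau_A>n\}$, the split at $M_A$ gives $\Eb_{k,\B,\theta}[\tau_A]\le M_A+R_A$, where $R_A=\sum_{n\ge M_A}\sup_k\Pb_{k,\B,\theta}\{\ldots\}\to 0$ as $A\to\infty$ because the full series in \eqref{LLRunifcomplete} converges. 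Letting $\varepsilon\downarrow 0$ through a regularity argument on $\Psi$ yields $\Eb_{k,\B,\theta}[(T_A^{\pb,\theta}-k)^+]\le \Psi(\log A/I_{\B,\theta})(1+o(1))$.

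\textbf{Lower bound.} I would mirror the change‑of‑measure calculation in the proof of Theorem~\ref{Th:LB}, but applied to the specific stopping time $T_A^{\pb,\theta}$ without a PFA class. Let $N=\Psi((1-\varepsilon)\log A/I_{\B,\theta})$ and $C=(1+\varepsilon')I_{\B,\theta}\psi(N)=(1+\varepsilon')(1-\varepsilon)\log A$, choosing $\varepsilon'>0$ so small that $(1+\varepsilon')(1-\varepsilon)<1-\varepsilon''$ for some $\varepsilon''>0$. Wald's likelihood ratio identity and the set inclusion $\Pr(\mc{A})\le \Pr(\mc{A}\cap\mc{B})+\Pr(\mc{B}^c)$ give
\[
\Pb_{k,\B,\theta}\{k<T_A^{\pb,\theta}\le k+N\}\le e^C\,\Pb_\infty\{k<T_A^{\pb,\theta}\le k+N\}+\gamma_{k,A},
\]
where $\gamma_{k,A}=\Pb_{k,\B,\theta}\{\max_{n\le N}\lambda_{\B,\theta}(k,k+n)\ge C\}\to 0$ by the SLLN implied by \eqref{LLRunifcomplete} (cf.\ \eqref{subbetato0}). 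Apply Doob's bound \eqref{PFADoobineq} to get $\Pb_\infty\{k<T_A^{\pb,\theta}\le k+N\}\le (r_A+k+N)/A$. Condition \eqref{CondPsi} forces $N=A^{o(1)}$ and \eqref{rA} forces $r_A=A^{o(1)}$, so $e^C(r_A+k+N)/A\le A^{-\varepsilon''+o(1)}\to 0$. Markov's inequality then yields $\Eb_{k,\B,\theta}[(T_A^{\pb,\theta}-k)^+]\ge N(1-o(1))$, and letting $\varepsilon\downarrow 0$ completes the matching lower bound.

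\textbf{Main obstacle.} The genuinely delicate step is controlling the tail of $\Eb_{k,\B,\theta}[\tau_A]$ in the upper bound: the uniform complete convergence hypothesis \eqref{LLRunifcomplete} is exactly what makes the residual sum $R_A$ negligible relative to $\Psi(\log A/I_{\B,\theta})$, and weaker SLLN‑type hypotheses would only bound $\Pb(\tau_A>M_A)\to 0$ in probability and would not suffice to control the expectation. The secondary technical point is passing from $\Psi(\log A/[I_{\B,\theta}(1-\varepsilon)])$ to $\Psi(\log A/I_{\B,\theta})(1+o(1))$ as $\varepsilon\downarrow 0$, which is guaranteed by the continuity and slow growth of $\Psi$ built into \eqref{CondPsi}.
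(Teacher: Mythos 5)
Your proposal is correct and takes essentially the same route as the paper's proof: the upper bound rests on the same inequality $\log R_{\pb,\theta}(k+n)\ge \lambda_{\B,\theta}(k,k+n)+\log p_\B$ combined with the tail-sum bound and the uniform complete convergence condition \eqref{LLRunifcomplete}, and the lower bound is the same change-of-measure argument with threshold $C\approx(1-\varepsilon^2)\log A$ plus Doob's submartingale bound \eqref{PFADoobineq} and conditions \eqref{rA}, \eqref{CondPsi}. The only cosmetic differences (introducing the one-sided stopping time $\tau_A$ instead of bounding $\Pb_{k,\B,\theta}(T_A^{\pb,\theta}-k>n)$ directly, and keeping $\varepsilon'$ separate from $\varepsilon$) do not change the substance of the argument.
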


\begin{proof}
For $\varepsilon\in(0,1)$, let $N_A=N_{A}(\varepsilon,\B,\theta) = \Psi\brc{(1-\varepsilon) (\log A)/I_{\B,\theta}}$. 
Inequality \eqref{PFADoobineq} implies that for all $k \in \Zbb_+$
\[
\Pb_{k, \B,\theta}(T_A^{\pb,\theta}>k)=\Pb_\infty (T_A^{\pb,\theta}>k) \ge 1- \frac{k+ r_A} {A} .
\]
Hence, using Markov's inequality, we obtain
\begin{align} \label{LBSRA}
\ADD_{k,\B,\theta}(T_A^{\pb,\theta}) &\ge \Eb_{k,\B,\theta} \brcs{(T_A^{\pb,\theta}-k)^+} \nonumber
\\
& \ge N_{A} \, \Pb_{k, \B,\theta}(T_A^{\pb,\theta}  -k > N_{A})  \nonumber
\\
&\ge N_{A}\brcs{\Pb_{k,\B,\theta}(T_A^{\pb,\theta}>k) - \Pb_{k, \B,\theta}(k <  T_A^{\pb,\theta} < k+N_{A})}  \nonumber
\\
&\ge  N_{A}\brcs{1- \frac{r_A  +k } {A} - \Pb_{k, \B,\theta}(k <  T_A^{\pb,\theta} < k+N_{A})}.
\end{align}

Similarly to \eqref{Probnuh}
\begin{equation}\label{ProbkNA}
\Pb_{k,\B,\theta}\set{k < T_A^{\pb,\theta} \le k + N_A} \le \kappa_{k,A}(T_A^{\pb,\theta}) + \gamma_{k,A},
\end{equation}
where
\[
\kappa_{k,A}(T_A^{\pb,\theta}) = e^{(1-\varepsilon^2) \log A} \Pb_{\infty}\set{0 < T_A^{\pb,\theta} -k \le N_A}
\]
and
\[
\gamma_{k,A}=  
\Pb_{k,\B,\theta}\set{\max_{0 \le n <  N_A } \lambda_{\B,\theta}(k, k+n) \ge (1+\varepsilon) I_{\B,\theta}\psi(N_A)}.
\]
 Using inequality \eqref{PFADoobineq}, we obtain
\[
\Pb_\infty\brc{0 < T_A^{\pb,\theta} - k <N_{A}} \le \Pb_\infty\brc{T_A^{\pb,\theta}  < k+ N_{A}} \le (k+r_A + N_{A})/A,
\]
and consequently,
\begin{equation}\label{UpperU}
 \kappa_{k,A}(T_A^{\pb,\theta}) \le \frac{k+ r_A+\Psi((1-\varepsilon) I_{\B,\theta}^{-1} \log A)}{A^{\varepsilon^2}} .
\end{equation}
Since, by conditions \eqref{rA} and \eqref{CondPsi},  for any $\varepsilon > 0$
\[
\frac{r_A+\Psi((1-\varepsilon) I_{\B,\theta}^{-1} \log A)}{A^{\varepsilon^2}} \to 0 \quad \text{as}~ A \to \infty.
\]
it follows from \eqref{UpperU}
that  $ \kappa_{k,A}(T_A^{\pb,\theta})\to 0$ as $A\to\infty$ for any $k\in \Zbb_+$. Also, $\gamma_{k,A} \to 0$ as $A\to\infty$
by condition \eqref{LLRunifcomplete}. Therefore,
\[
\Pb_{k, \B,\theta}\brc{0 < T_A^{\pb,\theta} -k < N_{A}}\to0 \quad \text{as}~ A \to \infty
\]
for any fixed $k$. It follows from \eqref{LBSRA}  that  as $A\to\infty$
\begin{equation*}
\ADD_{k,\B,\theta}(T_A^{\pb,\theta}) \ge \Psi\brc{\frac{(1-\varepsilon) \log A}{I_{\B,\theta}}} (1+o(1)),
\end{equation*}
where  $\varepsilon$ can be arbitrarily small. This yields the asymptotic lower bound (for any fixed $k\in\Zbb_+$, $\B\in\Pc$)
\begin{equation}\label{LBSRAasympt}
\liminf_{A\to\infty}\frac{\ADD_{k,\B,\theta}(T_A^{\pb,\theta})}{\log A} \ge \frac{1}{I_{\B,\theta}} .
\end{equation}

To prove \eqref{ADDMSR} it suffices to show that this bound is attained by $T_A^{\pb,\theta}$, i.e., 
\begin{equation}\label{UBMSR}
 \limsup_{A\to\infty} \frac{\ADD_{k, \B,\theta}(T_A^{\pb,\theta})}{\log A}  \le \frac{1}{I_{\B,\theta}} .
\end{equation}

For $0<\varepsilon < I_{\B,\theta}$, define 
\begin{equation}\label{MA}
M_{A}= M_{A}(\varepsilon,\B,\theta)=1+\floor{\Psi\brc{\frac{\log A}{I_{\B,\theta}-\varepsilon}}} .
\end{equation}
 Recall the definitions of the mixture SR statistic $R_{\pb,\theta}(n)$ and mixture LR $\Lambda_{\pb, \theta}(n)$ given in \eqref{MSR_stat} and \eqref{MLR},
 respectively. Obviously, for any $n \ge 1$ and $\B\in \Pc$,
\[
\log R_{\pb,\theta}(k+n) \ge  \log \Lambda_{\pb,\theta}(k,k+n)  \ge  \lambda_{\B,\vartheta}(k, k+n) +\log p_\B ,
\] 
and we have
\begin{align*}
&\Pb_{k, \B,\theta}\brc{T_A^{\pb,\theta}-k >n} \le \Pb_{k, \B,\theta}\set{\frac{1}{\psi(n)} \log R_{\pb,\theta}(k+n) < \frac{1}{\psi(n)} \log A}
\\
& \le \Pb_{k, \B,\theta}\set{\frac{1}{\psi(n)} \lambda_{\B,\theta}(k, k+n)  <  \frac{1}{\psi(n)} \brc{\log A  + |\log p_\B|}} ,
\end{align*}
where for any $n\ge M_{A}$ the last probability does not exceed the probability
\[
\Pb_{k, \B,\theta}\set{\frac{1}{\psi(n)} \lambda_{\B,\theta}(k, k+n) <  I_{\B,\theta} - \varepsilon + |\log p_\B|/\psi(M_A)}.
\]
Therefore,  for $I_{\B,\theta}> \varepsilon >0$, all $n \ge M_A$ and all sufficiently large  $A$ such that $|\log p_\B|/\psi(M_A) < \varepsilon/2$ , we have
\begin{align}
 \Pb_{k, \B,\theta}\brc{T_A^{\pb,\theta}-k >n} & \le 
 \Pb_{k, \B,\theta}\set{\frac{1}{\psi(n)} \lambda_{\B,\theta}(k, k+n) <  I_{\B,\theta} - \varepsilon/2}.
 \label{ProbkTAW}
\end{align}

Now, by Lemma~A1 in Tartakovsky~\cite[page 239]{Tartakovsky_book2020}, for any $\ell \ge 1$, $k\in\Zbb_+$, and $\B\in\Pc$, 
\begin{align}\label{Ektildetauineq}
& \Eb_{k, \B,\theta}\brcs{(T_A^{\pb,\theta}-k)^+}^\ell   \le 
 M_{A}^{\ell} + \ell \, 2^{\ell-1} \sum_{n= M_{A}}^{\infty}  n^{\ell-1}   \Pb_{k, \B,\theta}\brc{T_A^{\pb,\theta} >  n},
 \end{align}
 which along with inequality \eqref{ProbkTAW} yields
\begin{align}
\Eb_{k,\B,\theta}\brcs{\brc{T_A^{\pb,\theta}-k}^+} & \le   1+ \floor{\Psi\brc{\frac{\log A}{I_{\B,\theta}-\varepsilon}}} \nonumber
\\
&+ \sum_{n=M_A}^\infty  \Pb_{k, \B,\theta}\set{\frac{1}{\psi(n)} \lambda_{\B,\theta}(k, k+n) <  I_{\B,\theta} - \varepsilon/2}.
 \label{EkTAupper}
 \end{align}

Write 
\[
\Upsilon_A(\varepsilon,\B, \theta) =  \sum_{n=M_A}^\infty \sup_{k \in \Zbb_+}\Pb_{k, \B,\theta}\set{\frac{1}{\psi(n)} \lambda_{\B,\theta}(k, k+n) 
<  I_{\B,\theta} - \varepsilon}.
\]
Using \eqref{EkTAupper} and  the inequality $\Pb_\infty(T_A^{\pb,\theta} > k) > 1- (r_A +k)/A$ (see \eqref{PFADoobineq}), we obtain
\begin{align}
  \ADD_{k, \B,\theta}(T_A^{\pb,\theta})& =\frac{\Eb_{k, \B,\theta}\brcs{\brc{T_A^{\pb,\theta}-k}^+}}{\Pb_\infty(T_A^{\pb,\theta} > k)}  \nonumber
  \\
  &\le   \frac{1+\floor{\Psi\brc{\frac{\log A}{I_{\B,\theta}-\varepsilon}}} + \Upsilon_A(\varepsilon/2,\B, \theta)}{1- (r_A +k)/A}. 
  \label{ADDupperMSR}
 \end{align}
Since due to condition \eqref{rA} $r_A/A\to 0$ and, by complete convergence condition \eqref{LLRunifcomplete},  
$\lim_{A\to\infty} \Upsilon_A(\varepsilon/2,\B, \theta) = 0$ for all $\varepsilon>0$ and $\B\in\Pc$, 
 inequality \eqref{ADDupperMSR} implies the asymptotic inequality
\[
 \ADD_{k, \B,\theta}(T_A^{\pb,\theta})\le \brc{\frac{\log A}{I_{\B,\theta} - \varepsilon}} (1+o(1)) \quad \text{as}~A \to \infty.
\]
Since $\varepsilon$ can be arbitrarily small the asymptotic upper bound \eqref{UBMSR} follows
and the proof of the asymptotic approximation  \eqref{ADDMSR} is complete. \qed
\end{proof}

\paragraph{\bf \ref{sssec:AODMS}(b) Asymptotic Optimality}\label{sssec:AOTWA}

Since we assume that the prior distribution may depend on the prescribed PFA $\alpha$ the mean 
$\bar\nu_\alpha = \sum_{k=0}^\infty k \, \pi_k^\alpha$ depends on $\alpha$. We also suppose that the head-start  
$r=r_\alpha$ may depend on $\alpha$ and may go to infinity as $\alpha\to0$. We further assume that $r_\alpha$ and $\bar\nu_\alpha$ approach
infinity with such a rate that
\begin{equation}\label{PriorSRalpha}
\lim_{\alpha\to 0} \frac{{\log (r_\alpha+\bar\nu_\alpha)}}{|\log \alpha|} = 0.
\end{equation} 

The following theorem establishes the first-order asymptotic (as $\alpha\to0$) optimality of the mixture detection procedure 
$T_A^{\pb,\theta}$ for the fixed value of $\theta$ in the general non-i.i.d.\ case.  In this theorem, we assume that $\psi(x)$ satisfies condition 
\eqref{Condpsi} in contrast to Theorem~\ref{Th:AOCMSR} where this function can be practically arbitrary.

\begin{theorem}\label{Th:FOAOMSR} 
Let the prior distribution of the change point satisfy condition $\CP_1$ and let the mean of the prior distribution $\nu_\alpha$ and 
the head-start $r_\alpha$ of the statistic $R_{\pb,\theta}(n)$ go to infinity with such a rate that condition \eqref{PriorSRalpha} hold.  Assume that for some 
$0<I_{\B,\theta}<\infty$ ($\B\in\Pc$) the uniform complete convergence condition \eqref{LLRunifcomplete} holds. If  threshold
$A=A_\alpha$ is so selected that $\PFA_\pi(T_{A_\alpha}^{\pb,\theta}) \le \alpha$ and $\log A_\alpha\sim |\log\alpha|$ as $\alpha\to0$, 
in particular as $A_\alpha=(r_\alpha +\bar{\nu}_\alpha)/\alpha$, then for all $k\in \Zbb_+$ and $\B\in\Pc$ as $\alpha\to0$
\begin{align}
\ADD_{k,\B,\theta}(T_{A_\alpha}^{\pb,\theta}) &= \Psi\brc{\frac{|\log\alpha|}{I_{\B,\theta}}} (1+o(1));
\label{FOAOMSR}
\\
 \inf_{T \in \classalpi}\ADD_{k,\B,\theta}(T) & =  \ADD_{k,\B,\theta}(T_{A_\alpha}^{\pb,\theta})(1+o(1)).
 \label{FOAOMSR2}
 \end{align}
That is, the detection procedure $T_{A_\alpha}^{\pb,\theta}$ is first-order asymptotically optimal as 
$\alpha\to0$ in class $\classalpi$,  minimizing average detection delay  $\ADD_{k,\B,\theta}(T)$ uniformly for all $k\in \Zbb_+$ and $\B\in\Pc$.
\end{theorem}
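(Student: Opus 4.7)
The plan is to prove \eqref{FOAOMSR} and \eqref{FOAOMSR2} by sandwiching $\ADD_{k,\B,\theta}(T_{A_\alpha}^{\pb,\theta})$ between the asymptotic lower bound over the class $\classalpi$ supplied by Theorem~\ref{Th:LB} and the asymptotic upper bound for the specific procedure $T_{A_\alpha}^{\pb,\theta}$ supplied by Theorem~\ref{Th:AOCMSR}, with membership in the class furnished by Lemma~\ref{Lem:PFADSR}. Concretely, Lemma~\ref{Lem:PFADSR} applied to $A_\alpha=(r_\alpha+\bar{\nu}_\alpha)/\alpha$ immediately yields $T_{A_\alpha}^{\pb,\theta}\in\classalpi$, and condition \eqref{PriorSRalpha} forces $\log A_\alpha=|\log\alpha|+\log(r_\alpha+\bar\nu_\alpha)\sim|\log\alpha|$, which is the threshold hypothesis of the theorem.

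Next I would verify the two side conditions of Theorem~\ref{Th:AOCMSR} at $A=A_\alpha$. Condition \eqref{rA}, namely $r_{A_\alpha}A_\alpha^{-\varepsilon}\to 0$ for every $\varepsilon>0$, follows at once from \eqref{PriorSRalpha}, since $\log r_\alpha-\varepsilon\log A_\alpha = o(|\log\alpha|)-\varepsilon|\log\alpha|(1+o(1))\to -\infty$. Condition \eqref{CondPsi} on $\Psi$ is implied by the hypothesis $\psi(x)\ge x$ (condition \eqref{Condpsi}) assumed in this theorem, because then $\Psi(x)\le x$ and $\log\Psi(x)/x\le\log x/x\to 0$. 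Hence Theorem~\ref{Th:AOCMSR} gives
\[
\ADD_{k,\B,\theta}(T_{A_\alpha}^{\pb,\theta})=\Psi\brc{\frac{\log A_\alpha}{I_{\B,\theta}}}(1+o(1)).
\]
On the other side, the uniform complete convergence \eqref{LLRunifcomplete} implies the a.s.\ version \eqref{SLLNLLR} via Borel--Cantelli applied at each fixed $k$, so under $\CP_1$ Theorem~\ref{Th:LB} yields
\[
\inf_{T\in\classalpi}\ADD_{k,\B,\theta}(T)\ge \Psi\brc{\frac{|\log\alpha|}{I_{\B,\theta}}}(1+o(1)).
\]
Combining the two, together with $\log A_\alpha\sim|\log\alpha|$, produces \eqref{FOAOMSR} and hence \eqref{FOAOMSR2}.

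The main obstacle I anticipate is the seemingly innocuous passage from $\Psi(\log A_\alpha/I_{\B,\theta})(1+o(1))$ to $\Psi(|\log\alpha|/I_{\B,\theta})(1+o(1))$, which demands the ratio identity $\Psi(x(1+o(1)))/\Psi(x)\to 1$. Since $\Psi$ may be very slowly varying --- for instance, logarithmic when $\psi$ grows exponentially --- this is not automatic, and must be extracted either from the structural assumption $\psi(x)\ge x$ together with the continuity and monotonicity of $\psi$, or by imposing a mild regularity requirement on $\psi$ (regular variation suffices and is satisfied in every example treated in the paper). Once that step is settled, the remainder of the argument reduces to bookkeeping of the three preceding results.
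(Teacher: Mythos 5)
Your proposal is correct and follows essentially the same route as the paper: Lemma~\ref{Lem:PFADSR} places $T_{A_\alpha}^{\pb,\theta}$ in $\classalpi$ with $\log A_\alpha\sim|\log\alpha|$ via \eqref{PriorSRalpha}, Theorem~\ref{Th:AOCMSR} supplies the upper bound, and Theorem~\ref{Th:LB} supplies the matching lower bound. The regularity point you flag about passing $\log A_\alpha\sim|\log\alpha|$ through $\Psi$ (and likewise letting $\varepsilon\to0$ inside $\Psi$) is indeed left implicit in the paper as well, but it holds in all of the paper's examples where $\psi$ is regularly varying, so your argument matches the intended proof.
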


\begin{proof}
If threshold $A_\alpha$ is so selected that  $\log A_\alpha\sim |\log\alpha|$ as $\alpha\to0$, then asymptotic approximation \eqref{FOAOMSR}
follows from Theorem~\ref{Th:AOCMSR}. 
This asymptotic approximation coincides with the asymptotic lower bound \eqref{LBkinclass} in Theorem~\ref{Th:LB} if condition \eqref{Condpsi} 
on $\psi(x)$ holds. Hence, the lower bound is attained 
by $T^{\pb,\theta}_{A_\alpha}$, proving \eqref{FOAOMSR2} and the assertion of the theorem.

In particular, if threshold is selected as  $A_\alpha=(r_\alpha +\bar{\nu}_\alpha)/\alpha$, then by Lemma~\ref{Lem:PFADSR} 
$\PFA_\pi(T_{A_\alpha}^{\pb,\theta}) \le \alpha$ and $\log A_\alpha\sim -\log \alpha$ due to condition \eqref{PriorSRalpha}. \qed
\end{proof}

\begin{remark}\label{Rem:MSR}
If the prior distribution $\pi$ of the change point does not depend on $\alpha$ and condition $\CP_1$ is satisfied with $\beta \ge 0$, then 
the assertion of Theorem~\ref{Th:FOAOMSR} holds if, and only if, $\beta =0$, i.e., for heavy-tailed prior distributions, but not for priors with
exponential right tail with $\beta>0$. This follows from the fact that for $\beta>0$ the lower bound has the form
\[
\inf_{T\in \classalpi} \ADD_{k,\B,\theta}(T) \ge \Psi\brc{\frac{|\log \alpha|}{I_{\B,\theta} + \beta}} (1+o(1) \quad \text{as}~ \alpha \to 0 .
\]
\end{remark}

\subsubsection{Asymptotic Optimality of the Double-Mixture Procedure $T_A^{\pb,W}$ for Unknown Post-change Parameter}\label{sssec:AODMSR}

Consider now the case where the post-change parameter $\theta\in\Theta$ is unknown. The goal is to show that the double-mixture detection 
procedure $T_A^{\pb,W}$ defined in \eqref{DMSR_def} is asymptotically optimal to first order. 

Recall that in the case of the known parameter $\theta$, the sufficient condition for asymptotic optimality (imposed on the data model) is
the uniform complete version of the SLLN \eqref{LLRunifcomplete}. The proofs of Theorem~\ref{Th:LB} and Theorem~\ref{Th:AOCMSR} 
show that to establish asymptotic optimality of procedure $T_A^{\pb,\theta}$ it suffices to require the following two (right-tail and left-tail) conditions: 
for all $\varepsilon>0$,  $k\in\Zbb_+$ and $\B\in\Pc$
\begin{equation}\label{RTcond}
\lim_{L\to\infty} \Pb_{k,\B,\theta} \set{\frac{1}{\psi(L)} \max_{0 \le n < L} \lambda_{\B,\theta}(k, k+n) \ge (1+\varepsilon) I _{\B,\theta}} =0
\end{equation}
and
\begin{equation} \label{LTcond}
\sum_{n=1}^\infty  \Pb_{k, \B,\theta}\set{\frac{1}{\psi(n)} \lambda_{\B,\theta}(k, k+n) <  I_{\B,\theta} - \varepsilon} < \infty.
\end{equation}
Note that the SLLN \eqref{SLLNLLR} guarantees right-tail condition \eqref{RTcond} and complete version \eqref{LLRunifcomplete} 
guarantees both conditions \eqref{RTcond} and \eqref{LTcond}.

If the post-change parameter $\theta$ is unknown to obtain the upper bound for the expected detection delay the left-trail condition \eqref{LTcond}
has to be modified as follows. For $\delta>0$, define $\Gamma_{\delta,\theta}=\{\vartheta\in\Theta\,:\,\vert \vartheta-\theta\vert<\delta\}$ and
assume that there exist positive and finite numbers $I_{\B,\theta}$ ($\B \in \Pc$, $\theta\in \Theta$) such that for any $\varepsilon>0$ and for all
$\B\in\Pc$ and $\theta\in\Theta$
\begin{equation}\label{compLeft}
\lim_{\delta\to0}
\sum_{n=1}^\infty \sup_{k\in \Zbb_+} \Pb_{k,\B, \theta}\set{\frac{1}{\psi(n)} \inf_{\vartheta\in\Gamma_{\delta,\theta}}\lambda_{\B,\vartheta}(k, k+n) 
< I_{\B,\theta}  - \varepsilon} < \infty .
\end{equation}

We ignore parameter values with $W$-measure null, i.e., without special emphasis we will always assume that 
\[
W(\Gamma_{\delta,\theta})>0 ~~ \text{for all $\theta\in\Theta$ and $\delta>0$}.
\]

The following theorem generalizes Theorem~\ref{Th:AOCMSR} to the case of the unknown post-change parameter.

\begin{theorem}\label{Th:AOCDMSR} 
Suppose that conditions \eqref{rA} and \eqref{CondPsi} hold and there exist positive and finite numbers 
$I_{\B,\theta}$, $\B\in\Pc$, $\theta\in\Theta$, such that right-tail and left-tail conditions \eqref{RTcond} and \eqref{compLeft} are satisfied.
 Then the following asymptotic as $A\to\infty$ approximation holds
\begin{equation} \label{ADDDMSR}
 \ADD_{k,\B,\theta}(T_A^{\pb, W}) = \Psi\brc{\frac{\log A}{I_{\B,\theta}}} (1+o(1))\quad  \text{for all}~ k \in \Zbb_+, ~ \B\in\Pc, \theta\in\Theta.
\end{equation}
\end{theorem}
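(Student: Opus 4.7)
The plan is to follow the two-sided asymptotic argument in the proof of Theorem~\ref{Th:AOCMSR}, with the lower bound carrying over essentially verbatim and the upper bound requiring a new localization-in-$\theta$ step. For the lower bound, I would first note that in the proof of Lemma~\ref{Lem:PFADSR} it was established that $\{R_{\pb,W}(n)\}$ is a $(\Pb_\infty,\Fc_n)$-submartingale with mean $r_A+n$, so Doob's inequality gives $\Pb_\infty(T_A^{\pb,W}\le k)\le (r_A+k)/A$ just as in the known-$\theta$ case. Then Markov's inequality combined with the change-of-measure identity
$$\Pb_\infty\{k<T\le k+N_A\} = \Eb_{k,\B,\theta}\brcs{\ind{0<T-k\le N_A} e^{-\lambda_{\B,\theta}(k,T)}}$$
yields the same split $\Pb_{k,\B,\theta}\{k<T_A^{\pb,W}\le k+N_A\}\le\kappa_{k,A}+\gamma_{k,A}$ with $N_A=\Psi((1-\varepsilon)\log A/I_{\B,\theta})$, where $\kappa_{k,A}\to 0$ by \eqref{rA}, \eqref{CondPsi}, and the submartingale bound, and $\gamma_{k,A}\to 0$ by the right-tail condition \eqref{RTcond}. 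This produces the lower asymptotic $\liminf_{A\to\infty}\ADD_{k,\B,\theta}(T_A^{\pb,W})/\log A \ge 1/I_{\B,\theta}$.

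The new ingredient is in the upper bound. The key observation is that for any $\delta>0$,
$$R_{\pb,W}(k+n) \ge p_\B \int_{\Gamma_{\delta,\theta}} LR_{\B,\vartheta}(k,k+n)\,\mrm{d}W(\vartheta) \ge p_\B\, W(\Gamma_{\delta,\theta})\, \exp\Bl(\inf_{\vartheta\in\Gamma_{\delta,\theta}} \lambda_{\B,\vartheta}(k,k+n)\Br),$$
where the second inequality uses $\int_\Gamma e^{f}\mrm{d}W \ge W(\Gamma)\cdot \exp(\inf_\Gamma f)$. Taking logarithms yields
$$\log R_{\pb,W}(k+n) \ge \inf_{\vartheta\in\Gamma_{\delta,\theta}} \lambda_{\B,\vartheta}(k,k+n) + \log p_\B + \log W(\Gamma_{\delta,\theta}).$$
Thus, with $M_A=1+\floor{\Psi(\log A/(I_{\B,\theta}-\varepsilon))}$, for all $n\ge M_A$ and all $A$ so large that $|\log p_\B|/\psi(M_A)$ and $|\log W(\Gamma_{\delta,\theta})|/\psi(M_A)$ are each less than $\varepsilon/4$, one obtains the inclusion
$$\{T_A^{\pb,W}-k>n\} \subseteq \set{\frac{1}{\psi(n)} \inf_{\vartheta\in\Gamma_{\delta,\theta}} \lambda_{\B,\vartheta}(k,k+n) < I_{\B,\theta}-\varepsilon/2}.$$

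From here, the argument mirrors the upper-bound calculation in Theorem~\ref{Th:AOCMSR}. Applying Lemma~A1 of \cite{Tartakovsky_book2020} gives
$$\Eb_{k,\B,\theta}\brcs{(T_A^{\pb,W}-k)^+} \le M_A + \sum_{n=M_A}^\infty \Pb_{k,\B,\theta}\set{\frac{1}{\psi(n)} \inf_{\vartheta\in\Gamma_{\delta,\theta}} \lambda_{\B,\vartheta}(k,k+n) < I_{\B,\theta}-\varepsilon/2}.$$
The left-tail condition \eqref{compLeft}, for $\delta$ taken sufficiently small, makes the sum finite (and in fact vanishing relative to $M_A$ as $A\to\infty$). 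Dividing by $\Pb_\infty(T_A^{\pb,W}>k)\ge 1-(r_A+k)/A\to 1$ yields
$$\ADD_{k,\B,\theta}(T_A^{\pb,W}) \le \Psi\brc{\frac{\log A}{I_{\B,\theta}-\varepsilon}}(1+o(1)),$$
and letting $\varepsilon\to 0$ delivers the matching upper bound, establishing \eqref{ADDDMSR}.

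The main obstacle is the mixing-in-$\theta$ step: one must choose the order of operations so that the $\theta$-localization parameter $\delta$ is picked first (to activate \eqref{compLeft}), then $A$ is sent to infinity so that the constants $|\log p_\B|$ and $|\log W(\Gamma_{\delta,\theta})|$ become negligible relative to $\psi(M_A)\to\infty$, and only afterwards does $\varepsilon\to 0$. The continuity of $\Psi$ and the hypothesis $W(\Gamma_{\delta,\theta})>0$ are what allow this three-parameter limit to close cleanly; everything else is a direct transcription of the known-$\theta$ arguments.
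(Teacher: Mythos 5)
Your proposal is correct and follows essentially the same route as the paper: the lower bound is transcribed from the proof of Theorem~\ref{Th:AOCMSR} using the submartingale/Doob bound of Lemma~\ref{Lem:PFADSR}, the change of measure, and the right-tail condition \eqref{RTcond}, while the upper bound hinges on exactly the paper's localization inequality $\log R_{\pb,W}(k+n)\ge \inf_{\vartheta\in\Gamma_{\delta,\theta}}\lambda_{\B,\vartheta}(k,k+n)+\log p_\B+\log W(\Gamma_{\delta,\theta})$, followed by Lemma~A1 of \cite{Tartakovsky_book2020} and the left-tail condition \eqref{compLeft} with $\delta$ chosen small before sending $A\to\infty$ and then $\varepsilon\to0$. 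Your remark on the order of the limits in $\delta$, $A$, $\varepsilon$ matches the paper's implicit handling, so no gaps remain.
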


\begin{proof}
The proof of the asymptotic lower bound (for any fixed $k\in\Zbb_+$, $\B\in\Pc$ and $\theta\in\Theta$)
\begin{equation}\label{LBDMSRAasympt}
\ADD_{k,\B,\theta}(T_A^{\pb,W}) \ge \Psi\brc{\frac{\log A}{I_{\B,\theta}}} (1+o(1)) \quad \text{as}~ A \to \infty
\end{equation}
is essentially identical to that used to establish the lower bound \eqref{LBSRAasympt} for the expected delay to detection
$\ADD_{k,\B,\theta}(T_A^{\pb,\theta})$ in the proof of Theorem~\ref{Th:AOCMSR}. So it is omitted.

To prove asymptotic approximation \eqref{ADDDMSR} it suffices to show that the lower bound \eqref{LBDMSRAasympt} is attained by $T_A^{\pb,W}$, i.e., 
\begin{equation}\label{UBTDMSR}
 \limsup_{A\to\infty} \frac{\ADD_{k, \B,\theta}(T_A^{\pb,W})}{\log A}  \le \frac{1}{I_{\B,\theta}} .
\end{equation}

Let $M_A$ be as in \eqref{MA}. Since for any $n \ge 1$,
\[
\log R_{\pb,W}(k+n) \ge  \log \Lambda_{\pb,W}(k,k+n)  \ge  \inf_{\vartheta \in \Gamma_{\delta,\theta}} \lambda_{\B,\vartheta}(k, k+n) + \log W(\Gamma_{\delta,\theta})+\log p_\B ,
\] 
using essentially the same argument as in the proof of Theorem~\ref{Th:AOCMSR} that have led to inequality \eqref{ProbkTAW} we obtain 
that for all $n \ge M_A$
\begin{align*}
 \Pb_{k, \B,\theta}\brc{T_A^{\pb,W} >  n} & \le \Pb_{k, \B,\theta}\Bigg\{\frac{1}{\psi(n)} \inf_{\vartheta \in \Gamma_{\delta,\theta}} 
 \lambda_{\B,\vartheta}(k, k+n) <  I_{\B,\theta} - \varepsilon 
 \\
 &  - \frac{1}{\psi(M_A)}  \brc{\log p_\B +\log W(\Gamma_{\delta,\theta})}\Bigg\}.
\end{align*}
Hence, for all $n \ge M_A$ and all sufficiently large  $A$ such that $|\log [p_\B W(\Gamma_{\delta,\theta})]|/\psi(M_A) < \varepsilon/2$ ,
\begin{align} \label{ProbkTWSR}
\Pb_{k, \B,\theta}\brc{T_A^{\pb,W}-k >n}  
&  \le \Pb_{k,\B,\theta}\brc{\frac{1}{\psi(n)}\inf_{\vartheta \in \Gamma_{\delta,\theta}} \lambda_{\B,\vartheta}(k, k+n)  < I_{\B,\theta}  - \frac{\varepsilon}{2}}. 
\end{align}

Lemma~A1 in Tartakovsky~\cite[Page 239]{Tartakovsky_book2020} yields the inequality (for any $k\in\Zbb_+$, $\B\in\Pc$, and $\theta\in\Theta$)
\begin{align}\label{ETAWineq}
& \Eb_{k, \B,\theta}\brcs{(T_A^{\pb,W}-k)^+}   \le 
 M_{A} + \sum_{n= M_{A}}^{\infty}    \Pb_{k, \B,\theta}\brc{T_A^{\pb,W} >  n}.
 \end{align}
Using \eqref{ETAWineq} and \eqref{ProbkTWSR}, we obtain
\begin{equation}\label{EkinequppertildeTAW}
\Eb_{k, \B,\theta}\brcs{\brc{T_A^{\pb,W}-k}^+} \le 1+\Psi\brc{\floor{\frac{\log A}{I_{\B,\theta}-\varepsilon}}}+  \Upsilon_{A}(\varepsilon/2,\B, \theta),
\end{equation}
where 
\[
 \Upsilon_{A}(\varepsilon,\B, \theta) = \sum_{n=M_A}^\infty \sup_{k\in \Zbb_+} \Pb_{k,\B, \theta}\set{\frac{1}{\psi(n)} 
 \inf_{\vartheta\in\Gamma_{\delta,\theta}}\lambda_{\B,\vartheta}(k, k+n) < I_{\B,\theta}  - \varepsilon} .
\]
Next, inequality \eqref{EkinequppertildeTAW} along with the inequality $\Pb_\infty(T_A^{\pb,W} > k) > 1- (r_A +k)/A$ (see \eqref{PFADoobineq2}) 
implies the inequality
\begin{align}
  \ADD_{k, \B,\theta}(T_A^{\pb,W})
  &\le   \frac{1+ \Psi\brc{\left\lfloor \frac{\log A}{I_{\B,\theta}-\varepsilon} \right\rfloor} + \Upsilon_{A}(\varepsilon/2,\B, \theta)}{1- (r_A +k)/A}. 
  \label{ADDupperDMSR}
 \end{align}
Since  $r_A/A\to 0$ (see condition \eqref{rA}) and, by the left-tail condition \eqref{compLeft},  $\Upsilon_A(\varepsilon/2,\B, \theta) \to 0$ as $A\to \infty$
for all $\varepsilon>0$, $\B\in\Pc$, and $\theta\in\Theta$ inequality \eqref{ADDupperDMSR} implies the asymptotic inequality
\[
 \ADD_{k, \B,\theta}(T_A^{\pb,W})\le \brc{\frac{\log A}{I_{\B,\theta} - \varepsilon}} (1+o(1)) \quad \text{as}~A \to \infty,
\]
where $\varepsilon$ can be taken arbitrarily small so that the asymptotic upper bound \eqref{UBTDMSR} follows
and the proof of the asymptotic approximation  \eqref{ADDDMSR} is complete. \qed
\end{proof}

Using asymptotic approximation \eqref{ADDDMSR} in Theorem~\ref{Th:AOCDMSR} and the lower bound  \eqref{LBkinclass} in Theorem~\ref{Th:LB},  
it is easy to prove that the mixture procedure $T_A^{\pb,W}$ is asymptotically optimal to first order as  $\alpha\to0$ in class $\classalpi$. We silently 
assume that $\psi(x)$ is either a linear or super-linear function (see \eqref{Condpsi}).

\begin{theorem}\label{Th:AoptDSR} 
Let the prior distribution of the change point satisfy condition $\CP_1$.  Assume also  that the mean value $\bar\nu=\bar\nu_\alpha$ of the prior distribution
and the head-start $r=r_\alpha$ of the statistic $R_{\pb,W}(n)$ approach infinity 
as $\alpha\to0$  with such a rate that condition \eqref{PriorSRalpha} holds.
Suppose further that there exist numbers $0<I_{\B,\theta}<\infty$ ($\B\in\Pc, \theta\in\Theta$) such that conditions 
\eqref{RTcond} and \eqref{compLeft} are satisfied.  
If threshold $A_\alpha$ is so selected that $\PFA_\pi(T_{A_\alpha}^{\pb,W}) \le \alpha$  and $\log A_\alpha \sim |\log \alpha|$ as $\alpha \to 0$, 
in particular as $A_\alpha=(r_\alpha +\bar\nu_\alpha)/\alpha$, then for all  $k\in\Zbb_+$, $\B\in\Pc$ and $\theta\in\Theta$ as $\alpha\to 0$
\begin{align}
\ADD_{k,\B,\theta}(T_{A_\alpha}^{\pb,W}) & = \Psi\brc{\frac{|\log\alpha|}{I_{\B,\theta}}}(1+o(1));
\label{FOAODMSR}
\\
 \inf_{T \in \classalpi}\ADD_{k,\B,\theta}(T)  & = \ADD_{k,\B,\theta}(T_{A_\alpha}^{\pb,W}) (1+o(1)),
 \label{FOAODMSR2}
 \end{align}
that is, the procedure $T_{A_\alpha}^{\pb,W}$ is first-order asymptotically optimal as $\alpha\to0$ in class $\classalpi$.
\end{theorem}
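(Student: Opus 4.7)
The plan is to realize this statement as a direct corollary of Theorem~\ref{Th:AOCDMSR} (upper bound) and Theorem~\ref{Th:LB} (lower bound), once the threshold $A_\alpha$ has been verified to deliver $T_{A_\alpha}^{\pb,W}\in\classalpi$ together with $\log A_\alpha\sim|\log\alpha|$. The argument parallels the proof of Theorem~\ref{Th:FOAOMSR} for the single-mixture case.

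First I would check the threshold. For $A_\alpha=(r_\alpha+\bar\nu_\alpha)/\alpha$, inequality \eqref{PFADMSRineq} in Lemma~\ref{Lem:PFADSR} gives
\[
\PFA_\pi(T_{A_\alpha}^{\pb,W})\le \frac{r_\alpha b+\bar\nu_\alpha}{A_\alpha}\le\alpha,
\]
so $T_{A_\alpha}^{\pb,W}\in\classalpi$. Writing $\log A_\alpha=|\log\alpha|+\log(r_\alpha+\bar\nu_\alpha)$ and invoking the growth hypothesis \eqref{PriorSRalpha} yields $\log(r_\alpha+\bar\nu_\alpha)=o(|\log\alpha|)$, hence $\log A_\alpha\sim|\log\alpha|$. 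From this point the remainder of the argument depends only on these two properties of $A_\alpha$, so any other threshold choice satisfying them is handled identically.

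Second, Theorem~\ref{Th:AOCDMSR}, whose hypotheses \eqref{rA}, \eqref{CondPsi}, \eqref{RTcond}, and \eqref{compLeft} coincide with those posited here, supplies the upper bound
\[
\ADD_{k,\B,\theta}(T_{A_\alpha}^{\pb,W})=\Psi\brc{\frac{\log A_\alpha}{I_{\B,\theta}}}(1+o(1))=\Psi\brc{\frac{|\log\alpha|}{I_{\B,\theta}}}(1+o(1)),
\]
the second equality absorbing the $o(|\log\alpha|)$ correction into the $(1+o(1))$ factor in the same manner as in Theorem~\ref{Th:FOAOMSR}. This yields \eqref{FOAODMSR}. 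For the matching lower bound I would apply Theorem~\ref{Th:LB}: its standing hypotheses $\CP_1$ and the growth condition \eqref{Condpsi} on $\psi$ are in force, and consequently
\[
\inf_{T\in\classalpi}\ADD_{k,\B,\theta}(T)\ge\Psi\brc{\frac{|\log\alpha|}{I_{\B,\theta}}}(1+o(1)).
\]
Combining the two bounds delivers \eqref{FOAODMSR2}.

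The main (and really only) obstacle I anticipate is the following compatibility check: Theorem~\ref{Th:LB} is phrased under the full SLLN \eqref{SLLNLLR}, whereas Theorem~\ref{Th:AoptDSR} supplies only the right-tail condition \eqref{RTcond}. Inspection of the proof of Theorem~\ref{Th:LB} shows that the SLLN enters solely through the uniform right-tail limit $\sup_{k\ge 0}\gamma_{k,\alpha}\to 0$, which is exactly the content of \eqref{RTcond}; so the hypotheses stated here are in fact sufficient, and no extra assumption is needed. Once this is acknowledged, everything else is substitution.
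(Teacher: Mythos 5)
Your proposal is correct and follows essentially the same route as the paper: verify $T_{A_\alpha}^{\pb,W}\in\classalpi$ and $\log A_\alpha\sim|\log\alpha|$ via Lemma~\ref{Lem:PFADSR} and condition \eqref{PriorSRalpha}, obtain \eqref{FOAODMSR} from Theorem~\ref{Th:AOCDMSR}, and match it with the lower bound \eqref{LBkinclass} of Theorem~\ref{Th:LB}. Your compatibility check is also consistent with the paper, which notes before Theorem~\ref{Th:AOCDMSR} that the lower-bound argument only uses the right-tail condition \eqref{RTcond} (implied by the SLLN), so no extra hypothesis is needed.
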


\begin{proof}
If $A_\alpha$ is so selected that  $\log A_\alpha\sim |\log\alpha|$ as $\alpha\to0$, then asymptotic approximation \eqref{FOAODMSR}
follows from asymptotic approximation \eqref{ADDDMSR} in Theorem~\ref{Th:AOCDMSR}.  Since this approximation is the same as the 
asymptotic lower bound \eqref{LBkinclass} in Theorem~\ref{Th:LB}, this shows that the lower bound is attained by 
the detection rule $T_{A_\alpha}^{\pb,W}$, so \eqref{FOAODMSR2} follows and the proof is complete.

If, in particular,  $A_\alpha=(r_\alpha +\bar\nu_\alpha)/\alpha$, then $\log A_\alpha\sim |\log \alpha|$ and by Lemma~\ref{Lem:PFADSR}  
$T_{A_\alpha}^{\pb,W}\in \classalpi$. \qed
\end{proof}

\begin{remark}\label{Rem:DMSR}
The assertion of Theorem~\ref{Th:AoptDSR} holds when $\bar{\nu}<\infty$ and $r$ do not depend on $\alpha$ and
the condition $\CP_1$ is satisfied with $\beta \equiv 0$, i.e., for priors $\pi$ with heavy tails. This could be expected since the detection statistic 
$R_{\pb,W}(n)$ uses an improper uniform prior distribution of the change point on the whole positive line. 
\end{remark}

\subsubsection{The Case of Independent Streams}\label{sssec:indstreams}

Notice that so far we considered a very general stochastic model where not only the observations in streams may be dependent and 
non-identically distributed, but also the streams may be mutually dependent. In this very general case, computing statistic $R_{\pb,W}(n)$ 
is problematic even when the statistics in data streams can be computed.  Consider now still a very general scenario where the 
data streams are mutually independent (but have a general statistical structure), which is of special interest for many applications. 
The computational problem becomes more manageable when the data between data streams are independent. 

\paragraph{\bf \ref{sssec:indstreams}(a) Computational Issues}

In the case where the data across streams are independent, the model has the form
\begin{equation}\label{ind}
\begin{split}
& p(\Xb^n | H_{k,\B}, \teb_\B)  =  \prod_{t=1}^n \prod_{i=1}^N g_i(X_t(i)|\Xb^{t-1}(i)) \quad \text{for}~ \nu=k \ge n ,
\\
& p(\Xb^n | H_{k,\B},\teb_\B)  =  \prod_{t=1}^{k}  \prod_{i=1}^N g_i(X_t(i)|\Xb^{t-1}(i)) \times 
\\
& \quad \prod_{t=k+1}^{n}  \prod_{i\in \B} f_{i,\theta_i}(X_t(i)|\Xb^{t-1}(i)) \prod_{i\notin \B} g_i(X_t(i)|\Xb^{t-1}(i)) 
\quad \text{for}~ \nu=k < n,
\end{split}
\end{equation}
where $g_i(X_t(i)|\Xb^{t-1}(i))$ and $f_{i,\theta_i}(X_t(i)|\Xb^{t-1}(i))$ are conditional pre- and post-change densities in the $i$-th data stream, 
respectively, $\theta_i\in\Theta_i$ is the unknown post-change parameter (generally multidimensional) in the $i$-th stream ($i\in \{1,\dots,N\}=\Nc$), 
and $\teb_\B=(\theta_i,i\in\B)$ is the vector of parameters in the set $\B$. So the LR processes are
\begin{equation}\label{LRind}
LR_{\B,\teb_\B}(k, n) = \prod_{i\in \B} LR_{i,\theta_i}(k,n), \quad LR_{i,\theta_i}(k,n)= \prod_{t=k+1}^{n}  \Lc_{i,\theta_i}(t), \quad n > k,
\end{equation}
where $\Lc_{i,\theta_i}(t)=f_{i,\theta_i}(X_t(i)|\Xb^{t-1}(i))/g_i(X_t(i)|\Xb^{t-1}(i))$.

Recall that $\Pc_K =\{\B: 1 \le |\B| \le K\}$ is the subclass of $\Pc$ for which the cardinality $|\B|$ of the sets where the change may occur does not exceed $K \le N$
streams, and by $\Pc_K^\star =\{\B:  |\B| = K\}$ denote the subclass of $\Pc$ for which the change occurs in exactly $K$ streams. 

Assume, in addition, that the mixing measure is such that
\[
p_\B= C(\Pc_K) \prod_{i\in \B} p_i, \quad C(\Pc_K) = \brc{\sum_{\B \in \Pc_K} \prod_{i \in \B} p_i}^{-1}. 
\]
Then the mixture LR is
\[
\Lambda_{\pb,\teb}(k,n) = C(\Pc_K) \sum_{i=1}^K \sum_{\B\in \Pc_i^\star} \prod_{j\in\B} p_j LR_{j,\theta_i}(k,n),
\]
and its computational complexity is polynomial in the number of data streams. Moreover, in the special, most difficult case of $K=N$ and $p_j=p$, we obtain
\begin{equation}\label{mixLRind}
\Lambda_{\pb,\teb}(k,n) = C(\Pc_N) \brcs{\prod_{i=1}^N \brc{1+ p LR_{i,\theta_i}(k,n)} -1},
\end{equation}
so its computational complexity is only $O(N)$. The representation \eqref{mixLRind} corresponds to the case when each stream is affected independently with probability $p/(1+p)$,
the assumption that was made in \cite{Xie&Siegmund-AS13}.

\paragraph{\bf \ref{sssec:indstreams}(b) Asymptotic Optimality of Detection Procedures}

Since the data are independent across streams, for an assumed
value of the change point $\nu = k$,  stream $i \in \Nc$, and the post-change parameter value in the $i$-th stream $\theta_i$, the LLR of observations accumulated by time $k+n$ is given by
\[
\lambda_{i,\theta_i}(k, k+n)= \sum_{t=k+1}^{k+n}\log \frac{f_{i,\theta_i}(X_t(i)| \Xb^{t-1}(i))}{g_{i}(X_t(i))| \Xb^{t-1}(i))}, \quad n \ge 1.
\]
Define $\Gamma_{\delta,\theta_i}=\{\vartheta\in\Theta_i\,:\,\vert \vartheta-\theta_i\vert<\delta\}$,
\begin{align*}
\gamma_{k,L}^{(i)}(\varepsilon, \theta_i) & = 
\Pb_{k,i,\theta_i}\set{\frac{1}{\psi(L)}\max_{1 \le n \le L} \lambda_{i,\theta_i}(k, k+n) \ge (1+\varepsilon) I_{i,\theta_i}} ,
\\
\Upsilon^{(i)}(\varepsilon, \theta_i) & =   \lim_{\delta\to0}\sum_{n=1}^\infty \sup_{k\in \Zbb_+} 
\Pb_{k,i, \theta_i}\set{\frac{1}{\psi(n)} \inf_{\vartheta\in\Gamma_{\delta,\theta_i}}\lambda_{i,\vartheta}(k, k+n) < I_{i,\theta_i}  - \varepsilon},
\end{align*}
and assume that the following right-tail and left-tail conditions are satisfied for local LLR statistics in data streams:
There exist positive and finite numbers $I_{i,\theta_i}$, $\theta_i\in \Theta_i$, $i \in \Nc$, such that for any  $\varepsilon >0$
\begin{equation}\label{Pmaxi}
\lim_{L\to\infty} \gamma_{k,L}^{(i)}(\varepsilon, \theta_i)  =0 \quad \text{for all}~ k\in \Zbb_+, ~ \theta_i\in\Theta_i,  ~ i \in \Nc  ;
\end{equation}
and
\begin{equation}\label{rcompLefti} 
\Upsilon^{(i)}(\varepsilon, \theta_i) < \infty \quad \text{for all}~  \theta_i\in\Theta_i, ~ i\in \Nc  .
\end{equation}

We also assume that $W(\Gamma_{\delta,\theta_i})>0$ for all $\delta>0$ and $i\in\Nc$.

Let $I_{\B,\teb_{\B}} = \sum_{i\in\B} I_{i,\theta_i}$. Recall that
\[
\gamma_{k,L}(\varepsilon,\B,\theta)  = 
\Pb_{k, \B,\theta}\set{\frac{1}{\psi(L)}\max_{1 \le n \le L} \lambda_{\B,\theta}(k, k+n) \ge (1+\varepsilon) I_{\B,\theta}} .
\]
Since the LLR process $\lambda_{\B,\teb_\B}(k,k+n)$ is the sum of independent local LLRs,
$\lambda_{\B,\teb_\B}(k,k+n) = \sum_{i\in\B} \lambda_{i,\theta_i}(k,k+n)$ (see \eqref{LRind}), it is easy to see that
\[
\gamma_{k,L}(\varepsilon,\B,\teb_\B) \le \sum_{i\in\B}  \gamma_{k,L}^{(i)}(\varepsilon, \theta_i) ,
\]
so that local conditions \eqref{Pmaxi} imply global right-tail condition \eqref{RTcond}. This is true, in particular, if  
$\lambda_{i,\theta_i}(k, k+n)/\psi(n)$ converge $\Pb_{k,i, \theta_i}$-a.s.\ to  $I_{i,\theta_i}$,  $i=1,\dots,N$, in which case the 
SLLN for the global LLR \eqref{SLLNLLR} holds with $I_{\B,\teb_{\B}} = \sum_{i\in\B} I_{i,\theta_i}$.
Also,
\[
\Upsilon(\varepsilon, \B,\teb_\B) \le  \sum_{i\in\B} \Upsilon^{(i)}(\varepsilon, \B,\theta_i) ,
\]
which shows that local left-tail conditions \eqref{rcompLefti} imply global left-tail condition \eqref{compLeft}.

Theorem~\ref{Th:AoptDSR} implies the following results on asymptotic properties of the mixture procedure $T_A^{\pb,W}$.

\begin{corollary}\label{Cor:Cor1}
Assume that for some positive and finite numbers $I_{i,\theta_i}$, $\theta_i\in\Theta_i$, $i=1\dots,N$, 
right-tail and left-tail conditions \eqref{Pmaxi} and  \eqref{rcompLefti} for local data streams are satisfied. 
If threshold $A_\alpha$ is so selected that $\PFA_\pi(T_{A_\alpha}^{\pb,W}) \le \alpha$  and $\log A_\alpha \sim |\log \alpha|$ as $\alpha \to 0$, 
in particular as $A_\alpha=(r_\alpha +\bar\nu_\alpha)/\alpha$, and if conditions \eqref{Condpsi} and \eqref{PriorSRalpha} are satisfied, then asymptotic 
formulas \eqref{FOAODMSR} and  \eqref{FOAODMSR2} hold with $I_{\B,\theta} = I_{\B,\teb_\B}=\sum_{i\in\B} I_{i,\theta_i}$.
Therefore,  detection procedure $T_{A_\alpha}^{\pb,W}$ is first-order asymptotically optimal as $\alpha\to0$ in class $\classalpi$.
\end{corollary}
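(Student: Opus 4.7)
The plan is to invoke Theorem~\ref{Th:AoptDSR} directly, so all I need is to verify that its hypotheses (the global right-tail condition \eqref{RTcond} and left-tail condition \eqref{compLeft}) follow from their local per-stream counterparts \eqref{Pmaxi} and \eqref{rcompLefti} once we exploit the independence structure \eqref{ind}. Conditions \eqref{Condpsi}, \eqref{PriorSRalpha}, and $\CP_1$ are assumed directly and need no reworking, and the PFA constraint $T_{A_\alpha}^{\pb,W}\in\classalpi$ together with $\log A_\alpha\sim|\log\alpha|$ is handled exactly as in Lemma~\ref{Lem:PFADSR}.

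First, under \eqref{ind} the LLR decomposes additively: $\lambda_{\B,\teb_\B}(k,k+n) = \sum_{i\in\B}\lambda_{i,\theta_i}(k,k+n)$, and hence the candidate rate is $I_{\B,\teb_\B}=\sum_{i\in\B}I_{i,\theta_i}$. For the global right-tail bound, I would note the trivial pointwise inequality $\max_{1\le n\le L}\sum_{i\in\B}\lambda_{i,\theta_i}(k,k+n) \le \sum_{i\in\B}\max_{1\le n\le L}\lambda_{i,\theta_i}(k,k+n)$, so the event in the definition of $\gamma_{k,L}(\varepsilon,\B,\teb_\B)$ implies that at least one stream satisfies $\max_{n}\lambda_{i,\theta_i}(k,k+n)/\psi(L)\ge(1+\varepsilon)I_{i,\theta_i}$. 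A union bound then yields
\[
\gamma_{k,L}(\varepsilon,\B,\teb_\B) \le \sum_{i\in\B}\gamma_{k,L}^{(i)}(\varepsilon,\theta_i) \xrightarrow[L\to\infty]{} 0
\]
by \eqref{Pmaxi}, which is precisely \eqref{RTcond}.

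For the left-tail condition, I would take $\Gamma_{\delta,\teb_\B}=\prod_{i\in\B}\Gamma_{\delta,\theta_i}$, so that independence gives
\[
\inf_{\vartheta_\B\in\Gamma_{\delta,\teb_\B}}\lambda_{\B,\vartheta_\B}(k,k+n) = \sum_{i\in\B}\inf_{\vartheta_i\in\Gamma_{\delta,\theta_i}}\lambda_{i,\vartheta_i}(k,k+n).
\]
If this sum falls below $\sum_{i\in\B}I_{i,\theta_i}-\varepsilon$, then at least one summand falls below $I_{i,\theta_i}-\varepsilon/|\B|$; a union bound, taking $\sup_k$, summing in $n$ and then $\delta\to 0$, gives
\[
\Upsilon(\varepsilon,\B,\teb_\B) \le \sum_{i\in\B}\Upsilon^{(i)}(\varepsilon/|\B|,\theta_i) < \infty
\]
by \eqref{rcompLefti}, which is exactly the global condition \eqref{compLeft}. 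Finally, $W(\Gamma_{\delta,\theta_i})>0$ for all $i$ ensures the product measure of $\Gamma_{\delta,\teb_\B}$ is positive, which is the nondegeneracy hypothesis used in Theorem~\ref{Th:AoptDSR}.

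With both global conditions verified and the identification $I_{\B,\theta}=\sum_{i\in\B}I_{i,\theta_i}$ in place, Theorem~\ref{Th:AoptDSR} applies verbatim and produces \eqref{FOAODMSR}--\eqref{FOAODMSR2}. There is no real obstacle here: the entire argument is a union-bound reduction. The only subtle point worth double-checking is that the loss of a factor $1/|\B|$ inside $\Upsilon^{(i)}$ is harmless because \eqref{rcompLefti} is assumed for \emph{every} $\varepsilon>0$, so summability is preserved. Everything else is bookkeeping.
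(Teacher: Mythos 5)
Your proposal is correct and follows essentially the same route as the paper: decompose the LLR additively across independent streams, use union bounds to pass from the local conditions \eqref{Pmaxi} and \eqref{rcompLefti} to the global conditions \eqref{RTcond} and \eqref{compLeft} with $I_{\B,\teb_\B}=\sum_{i\in\B}I_{i,\theta_i}$, and then invoke Theorem~\ref{Th:AoptDSR}. Your handling of the product neighborhood $\Gamma_{\delta,\teb_\B}$ and the harmless $\varepsilon/|\B|$ degradation in the left-tail bound is in fact slightly more careful than the paper's one-line statement of the corresponding inequalities.
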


\begin{remark} \label{Rem: Wi}
The assertions of Corollary~\ref{Cor:Cor1} also hold for different distribution functions $W_i$, $i\in \Nc$ in streams if we assume that $W_i(\Gamma_{\delta,\theta_i})>0$ for all $\delta>0$ and $i\in\Nc$. 
A modification in the proof is trivial.
\end{remark}

\begin{remark} \label{Rem: theta}
In the case where the post-change parameter is known, essentially similar corollary follows from Theorem~\ref{Th:FOAOMSR} if we require the uniform complete version of the SLLN for all streams $i\in \Nc$:
\begin{equation}\label{Ucompi}
\sum_{n=1}^\infty \sup_{k\in\Zbb_+} \Pb_{k, i,\theta_i}\set{\abs{\frac{\lambda_{i,\theta_i}(k,k+n)}{\psi(n)} - I_{i,\theta_i}}> \varepsilon} < \infty 
\quad \text{for all}~ \varepsilon >0.
\end{equation}
\end{remark}

\section{Examples}\label{sec:Examples}

\subsection{Detection of Changes in the Mean Values of Multistream Autoregressive Non-stationary Processes}\label{ssec:Ex1}

This example related to detecting changes in unknown means of multistream non-stationary AR($p$) processes has a specific real application in 
addition to several other applications in mathematical statistics. Specifically, 
it arises in multi-channel sensor systems 
(such as radars  and electro-optic imaging systems) where it is required to detect an  unknown number of randomly appearing signals from
objects in clutter and sensor noise (cf., e.g., \cite{Bakutetal-book63,Tartakovsky&Brown-IEEEAES08,TNB_book2014}). 

Observations in the $i$-th channel have the  form
\begin{equation}\label{MeanARmodel}
X_{n}(i)=\theta_i \, S_{n}(i) \Ind{n > \nu}  +\xi_{n}(i),\quad n \ge 1, ~ i =1,\dots,N ,
\end{equation}
where $\theta_i \, S_{n}(i)$ are deterministic signals with unknown ``amplitudes'' $\theta_i>0$ that appear at an unknown time $\nu$
in additive noises $\xi_{n}(i)$ in an $N$-channel system.  
Assume that noise processes $\{\xi_{n}(i)\}_{n \in\Zbb_+}$ ($i=1,\dots,N$) are mutually independent  $p$-th order Gaussian autoregressive processes 
AR$(p)$ that obey recursions
\begin{equation}\label{sec:Ex.1}
\xi_{n}(i) = \sum_{j=1}^p \rho_{i,j} \xi_{n-j}(i) + w_{n}(i), \quad n \ge 1, 
\end{equation}
where   $\{w_{n}(i)\}_{n\ge 1}$, $i=1,\dots,N$, are mutually independent i.i.d.\ normal $\Nc(0,\sigma_i^2)$ sequences ($\sigma_i>0$), so the observations 
in channels $X_{n}(1),\dots,X_{N}(n)$ are independent of each other. For simplicity, let us set zero initial conditions $\xi_{1-p}(i)=\xi_{2-p}(i)=\cdots=\xi_{0}(i)=0$. 
The coefficients $\rho_{i,1},\dots,\rho_{i,p}$ and variances $\sigma_i^2$ are known and all roots of the equation $z^p -\rho_{i,1} z^{p-1} - \cdots - \rho_{i,p}=0$ 
are in the interior of the unit circle, so that the AR($p$) processes are stable.  

For $n \ge 1$, define the $p_n$-th order residuals 
\[
\widetilde{S}_{n}(i) = S_{n}(i)- \sum_{j=1}^{p_n} \rho_{i,j} S_{n-j}(i), \quad  \widetilde{X}_{n}(i) = X_{n}(i)- \sum_{j=1}^{p_n} \rho_{i,j} X_{n-j}(i),
\]
where $p_n =p$ if $n > p$ and $p_n =n$ if $n \le p$.  It is easily shown that the conditional pre-change and post-change densities in the $i$-th channel are
\begin{align*}
g_i(X_{n}(i) |\Xb^{n-1}(i))& = f_{0,i}(X_{n}(i)|\Xb^{n-1}(i))= \frac{1}{\sqrt{2\pi \sigma_i^2}} \exp\set{-\frac{\wtX_{n}(i)^2}{2\sigma_i^2}},
\\
f_{i,\theta_i}(X_{n}(i) |\Xb^{n-1}(i))&=   \frac{1}{\sqrt{2\pi \sigma_i^2}} \exp\set{-\frac{(\wtX_{n}(i)-\theta_i \wtS_{n}(i))^2}{2\sigma_i^2}} , \quad
\theta_i \in (0, \infty),
\end{align*}
and that for all $k \in \Zbb_+$ and $n \ge 1$  the LLR in the $i$-th channel has the form
$$
\lambda_{i,\theta_i}(k, k+n) = \frac{\theta_i}{\sigma_i^2}  \sum_{t=k+1}^{k+n} \wtS_{t}(i) \wtX_{t}(i) -
\frac{\theta_i^2 \sum_{t=k+1}^{k+n} \wtS_{t}(i)^2}{2 \sigma_i^2} .
$$
Since under measure $\Pb_{k,i,\vartheta_i} $ the random variables $\{\wtX_{n}(i)\}_{n\ge k+1}$ are independent Gaussian random variables 
$\Nc(\vartheta_i \wtS_{n(i)},\sigma_i^2)$, under  $\Pb_{k,i,\vartheta_i} $ the LLR $\lambda_{i,\theta_i}(k, k+n) $ is a Gaussian process 
(with independent but non-identically distributed increments) with mean and variance
\begin{equation}\label{LLRAR1}
\begin{split}
\Eb_{k,i,\vartheta_i} [\lambda_{i, \theta_i}(k, k+n)]  & = \frac{2 \theta_i \vartheta_i -\theta_i^2}{2\sigma_i^2} \sum_{t=k+1}^{k+n} \wtS_{t}(i)^2 , 
\\
\Var_{k,i,\vartheta_i} [\lambda_{i, \theta_i}(k, k+n)]  & =   \frac{\theta_i^2}{\sigma_i^2} \sum_{t=k+1}^{k+n} \wtS_{t}(i)^2 .
\end{split}
\end{equation}

Assume that 
\begin{equation*}
\lim_{n\to \infty} \frac{1}{\psi(n)}  \sup_{k \in \Zbb_+} \sum_{t=k+1}^{k+n} \wtS_{t}(i)^2 = Q_i ,
\end{equation*}
where  $0<Q_i <\infty$. In a variety of signal processing applications this condition holds with $\psi(n)=n$, e.g., in radar applications where the 
signals $\theta_i \, S_{i,n}$ are the sequences of harmonic pulses. In some applications such as detection, recognition, and tracking of objects on ballistic
trajectories that can be approximated by polynomials of order $m=2-3$, the function $\psi(n)=n^m$, $m >1$. 
Then for all $k\in\Zbb_+$ and $\theta_i\in (0,\infty)$
\[
\frac{1}{\psi(n)}\lambda_{i,\theta_i}(k, k+n)  \xra[n\to\infty]{ \Pb_{k,i,\theta_i} -\text{a.s.}} \frac{\theta_i^2 Q_i}{2\sigma_i^2} =I_{i,\theta_i},
\]
so that the SLLN \eqref{SLLNLLR} takes place, and therefore, the right-tail condition \eqref{Pmaxi} holds. 
Furthermore, since the second moment of the LLR is finite it can be shown that uniform complete convergence conditions \eqref{Ucompi} as well as
the left-tail condition \eqref{rcompLefti} also hold. 
Thus, by Corollary~\ref{Cor:Cor1}, the mixture detection procedure $T_{A_\alpha}^{\pb,W}$ minimizes as $\alpha\to0$ the expected delay to detection and
asymptotic formulas \eqref{FOAODMSR} and  \eqref{FOAODMSR2} hold with 
\[
I_{\B,\theta}=I_{\B,\teb_\B}=\sum_{i\in\B}  \frac{\theta_i^2 Q_i}{2\sigma_i^2}.
\]

\subsection{Change Detection in the Spectrum of the AR($p)$ Multistream Process}\label{ssub:Ex2}
 
Consider the problem of detecting the change of the correlation coefficient in the $p$-th order AR process which in the $i$-th stream satisfies the recursion 
\begin{equation}\label{sec:Ex.7}
X_{n}(i) =\sum_{j=1}^{p} \rho^{(n)}_{i,j}\,X_{n-j}(i)+w_{n}(i) \, , \quad n\ge 1,
\end{equation}
where 
\[
\rho^{(n)}_{i,\ell}=\theta^{*}_{i,\ell}\Ind{n \le \nu}+\theta_{i,\ell}\Ind{n > \nu}
\]
and  $\{w_{n}(i)\}_{n\ge 1}$ are i.i.d. (mutually independent) Gaussian random variables with $\EV[w_{1}(i)]=0$, $\EV[w^{2}_{1}(i)]=1$.  
Additional notation:  
\[
\theta^{*}_{i}=(\theta^{*}_{i,1},\ldots,\theta^{*}_{i,p})^\top, ~~ \theta_{i}=(\theta_{i,1},\ldots,\theta_{i,p})^\top, ~~ 
\Xb_i^{n-1,n-p}= (X_{n-1}(i),\ldots,X_{n-p}(i)).
\]
($\top$ denotes transpose).

It is easy to see that the pre-change and post-change conditional densities $g_i(X_{i,n}\vert \Xb_i^{n-1})=g_i(X_{n}(i)\vert  \Xb_i^{n-1,n-p})$ 
and  $f_{i, \theta_i}(X_{n}(i)\vert \Xb_i^{n-1}(i))= f_{i,\theta_i}(X_{n}(i)\vert \Xb_i^{n-1,n-p})$ are given by 
 \begin{equation}
 \label{dens-AR-p-1}
 \begin{split}
 g_i(X_{n}(i)\vert  \Xb_i^{n-1,n-p}) & = \frac{1}{(2\pi)^{p/2}} \,\exp\set{-\dfrac{(\eta^{*}_{i}(X_n(i),  \Xb_i^{n-1,n-p})^{2}}{2}} ,
 \\
 f_{i,\theta_i}(X_{n}(i)\vert \Xb_i^{n-1,n-p}) & = \frac{1}{(2\pi)^{p/2}} \,\exp\set{-\dfrac{(\eta_{\theta_i,i}(X_n(i),  \Xb_i^{n-1,n-p}))^{2}}{2}},
 \end{split}
\end{equation}
where $\eta^{*}_{i}(y,x)=y-(\theta^{*}_{i})^{\top} x$ and $\eta_{\theta_i,i}(y,x)=y-(\theta_{i})^{\top} x$ ($x\in\bbr$, $y\in \bbr^p$).
Therefore,  for any $\theta_i\in\bbr^{p}$, the LLR is 
\[
\lambda_i(k, k+n) = \sum_{t=k+1}^{k+n}  \lambda_i^*(t)
\]
where
\begin{equation}\label{func-g-Coef-nn-1}
\begin{split}
\lambda_i^*(t) & =
\log\frac{f_{i,\theta_i}(X_t(i)\vert \Xb_i^{t-1,t-p})}{g_{i}(X_t(i)\vert \Xb_i^{t-1,t-p})}
\\
& =
X_t(i)(\theta_i-\theta^{*}_{i})^{\top} \Xb_i^{t-1,t-p}+\frac{((\theta^{*}_{i})^{\top} \Xb_i^{t-1,t-p})^{2}-(\theta_i^{\top}\Xb_i^{t-1,t-p})^{2}}{2}.
\end{split}
\end{equation}
 The process \eqref{sec:Ex.7} is not Markov, but the $p$-dimensional processes
 \[
\Phi_{i,n}=(X_{i,n},\ldots,X_{i,n-p+1})^{\top}\in\bbr^{p},  ~~ i =1,\dots,N
\]
are Markov. Now, for any $\vartheta=(\vartheta_{1},\ldots,\vartheta_{p})\in\bbr^{p}$, define the matrix
\[
\Lambda(\vartheta)= \begin{pmatrix}
\vartheta_{1} & \vartheta_2 & \dots & \vartheta_{p}\\
1 & 0 &  \dots & 0\\
\vdots & \vdots &  \ddots & \vdots \\
0 & 0 &\dots 1&0 
\end{pmatrix} \, .
\]
Note that
\begin{equation}\label{Phi}
\Phi_{i,n} = \begin{cases}
 \Lambda(\theta_i^*) \Phi_{i,n-1}+\wt{w}_{i,n}  &  \mbox{for} ~ n\le \nu ,
 \\
\Lambda(\theta_i)\Phi_{i,n-1}+\wt{w}_{i,n} &  \mbox{for} ~ n >  \nu ,
\end{cases}
\end{equation}
where 
$\wt{w}_{i,n}=(w_{i,n},0,\ldots,0)^\top \in\bbr^{p}$.
Obviously,
\[
\EV[\wt{w}_{n}\,\wt{w}^{\top}_{n}] =B=
\begin{pmatrix}
1 & \dots & 0\\
\vdots & \ddots & \vdots\\
0 & \dots & 0
\end{pmatrix} .
\]

Assume that all eigenvalues of the matrices $\Lambda(\theta^{*}_{i})$ in modules are less than $1$ and that $\theta_{i}$ 
belongs to the set 
\begin{equation}
\label{set-Theta-i}
\Theta^{st}_{i}=\{\theta_i\in \bbr^{p}\,:\, \max_{1\le j\le p}\,\vert\e_{j}(\Lambda(\theta_i))\vert<1\}\setminus \,\{\theta^{*}_{i}\},
\end{equation}
where $\e_{j}(\Lambda)$ is the $j$-th eigenvalue of the matrix $\Lambda$.
Using \eqref{Phi} it can be shown that in this case the processes $\{\Phi_{i,n}\}_{n>\nu}$ ($i =1,\dots,N$) are ergodic with stationary 
normal distributions $\Nc(0,\F_{i}(\theta_i))$, where 
\[
\F_{i}(\theta_i)=\sum_{n\ge 0} (\Lambda(\theta_i))^n B (\Lambda^\top(\theta_i))^{n}.
\]

Taking into account that 
$\max_{1\le i\le N}\,\sup_{t\ge  1}\,\Eb_\infty\vert X_{t}(i)\vert^{m}<\infty$
for any $m>0$ and using techniques developed in \cite{PerTar-JMVA2019} it can be shown that the uniform complete convergence conditions
\eqref{Ucompi} as well as the left-tail conditions \eqref{rcompLefti} hold. Therefore, Corollary~\ref{Cor:Cor1}
implies that the mixture detection procedure $T_{A_\alpha}^{\pb,W}$ minimizes as $\alpha\to0$ the expected delay to detection for any compact subset 
of $\Theta=\Theta^{st}_{1}\times\cdots\times \Theta^{st}_{N}$ and
asymptotic formulas \eqref{FOAODMSR} and  \eqref{FOAODMSR2} hold with
\[
I_{\B,\theta}=I_{\B,\teb_\B}= \frac{1}{2} \sum_{i\in\B}  (\theta_i-\theta_i^*)^\top \F_{i}(\theta_i)(\theta_i-\theta_i^*) .
\] 
  In particular, in the purely Markov scalar case where $p=1$ in \eqref{sec:Ex.7},
 \[
I_{\B,\theta}=I_{\B,\teb_\B}= \sum_{i\in\B}  \frac{(\theta_i-\theta^{*}_{i})^{2}}{2(1-\theta_i^{2})} .
\] 
 
\section{Monte Carlo} \label{sec:MC} 

In this section, we perform MC simulations for the example considered in Subsection~\ref{ssec:Ex1}, assuming for simplicity (and with a very minor
loss of generality) that the noise processes $\xi_n(i)=w_n(i)$ in \eqref{MeanARmodel} are i.i.d. Gaussian with mean zero and variances 
$\sigma_i^2$, which is equivalent setting $p=0$ in \eqref{sec:Ex.1}. We also assume that $S_n(i) = n^{1.1}$, $\sigma_i^2=4$, $\theta_i=0$ pre-change and 
$\theta_i=\theta=0.1$ in the post-change mode when the change occurs in the $i$-th stream.

We suppose that the change in each channel occurs independently with probability $q$, so the probability of the event $\{M=m\}$
that $m$ streams (out of $N$) are affected is  
\[
\Pb(M=m) = \frac{N!}{m! (N - m)!} \, q^{m} \, (1 - q)^{N - m} .
\]
The change occurs at time $\nu$ according to the geometric prior distribution 
\[
\Pb(\nu=k)=\varrho(1-\varrho)^k, \quad k=0,1\dots
\] 
with a parameter $\varrho \in (0,1)$. 

In MC simulations, we set $N=10$ for the total number of streams, $\varrho=0.1$, $q=1/N=0.1$, and consider three cases: 
(a) the change occurs in a single stream with $\theta=0.1$, 
(b) the change occurs in two streams with $\theta=0.1$ in each, and (c)  the change occurs in three streams with $\theta=0.1$ in each.

For each MC run, using equations \eqref{LRind} and \eqref{mixLRind}, we compute the statistics \eqref{MSR_stat} and \eqref{DMSR_stat} and get the stopping times 
for the cases where the parameter of the post-change distribution is known \eqref{MS_def} and when the parameter of the post-change distribution 
is unknown \eqref{DMSR_def}. We take uniform prior $W(\theta)$ on $[0.1,0.3]$ with the step $0.01$. The number of Monte Carlo runs is $10^6$, 
which ensures very high accuracy of the estimated characteristics.

The results are shown in Table~\ref{t:ADD_vs_PFA} and Fig.~\ref{EDD_vs_PFA}. In Fig.~\ref{EDD_vs_PFA}, $\PFA$ (x-axis) is presented in a logarithmic scale. Marks on the figure $2, 3, \dots, 9$ mean how many times the $\PFA$ value is greater than the previous decimal mark (0.0001, 0.001, or 0.01).
It is seen that the detection algorithm has good performance even with such a low 
signal-to-noise ratio. As expected, in the case when the parameter of the post-change distribution is not known, the algorithm works worse than when it is known, 
but only slightly -- the difference is small. When the change occurs in multiple streams,  the expected detection delays  
$\ADD_{\B_2,\theta}(T_A^{\pb,W})$ and $\ADD_{\B_3,\theta}(T_A^{\pb,W})$ decrease compared to $\ADD_{\B_1,\theta}(T_A^{\pb,W})$ in the case (a), and also
$\ADD_{\B_3,\theta}(T_A^{\pb,W})$ becomes smaller than $\ADD_{\B_2,\theta}(T_A^{\pb,W})$, as expected from theoretical results (see, e.g.,
\eqref{ADDDMSR}). Here $\ADD_{\B_m,\theta}(T_A^{\pb,W})$ denotes the expected detection delay when the change occurs in $m$ streams ($m=1,2, 3$).

It is also interesting to compare the MC estimates of the expected detection delay with theoretical asymptotic approximations given by \eqref{ADDDMSR}.
In our example, these approximations reduce to
\begin{equation*} 
\overline{ \ADD}_{\B_m,\theta} \approx \brc{\frac{\log A_m}{I_{\B_m,\theta}}}^{1/3.2}
\end{equation*}
with $I_{\B_m,\theta}=m (\theta^2/6.4 \sigma^2)$, $m=1,2,3$. Note that thresholds $A_m$ are different for different $m$ to guarantee the same PFA. 
The data in Table~\ref{t:ADD_vs_PFA} show that the first-order asymptotic approximation is not too bad but not especially accurate.

\begin{table}[h]
	\begin{center}
    \newcommand{\ten}[2]{$#1 \cdot\! 10^{#2}$}
    \caption{Operating characteristics of the mixture detection procedures for the number of streams $N=10$ with parameters $S_n(i) = n^{1.1}$ and 
    $\theta_1=\theta_2=\theta_3=\theta=0.1$. The number of MC runs is $10^6$.}
    \renewcommand{\arraystretch}{1.35}
    \begin{tabular}{| c  | l  |c | c |  c | c |c |c |c|}
    \hline\hline
        $\PFA_\pi(T)$ & $10^{-1}$ & \ten{5}{-2} & $10^{-2}$ & \ten{5}{-3} & $10^{-3}$ & \ten{5}{-4} \\
        \hline
	$\ADD_{\B_1,\theta}(T_A^{\pb,\theta})$    	& 10.77 & 11.28 & 12.76 & 13.34 & 14.68 & 15.22 \\       
        $\ADD_{\B_1,\theta}(T_A^{\pb,W})$       	& 11.33 & 12.07 & 13.32 & 13.93 & 15.55 & 16.02 \\ 
        $\overline{\ADD}_{\B_1,\theta}$			& 15.99 & 16.92 & 18.84 & 19.50 & 21.05 & 21.63 \\       
        \hline
        $\ADD_{\B_2,\theta}(T_A^{\pb,\theta})$ 	& 8.77 & 9.33 & 10.50 & 10.88 & 12.00 & 12.46 \\       
        $\ADD_{\B_2,\theta}(T_A^{\pb,W})$      	& 8.90 & 9.52 & 10.82 & 11.30 & 12.34 & 12.81 \\
        $\overline{\ADD}_{\B_2,\theta}$			& 11.78 & 12.66 & 14.43 & 15.02 & 16.38 & 16.88 \\
        \hline
        $\ADD_{\B_3,\theta}(T_A^{\pb,\theta})$ 	& 8.01 & 8.34 & 9.33 & 9.79 & 10.77 & 11.12 \\       
        $\ADD_{\B_3,\theta}(T_A^{\pb,W})$         	& 8.05 & 8.47 & 9.55 & 9.99 & 10.96 & 11.39 \\
        $\overline{\ADD}_{\B_3,\theta}$		        & 7.90 & 9.20 & 11.35 & 12.00 & 13.45 & 13.96 \\ \hline\hline
	\end{tabular}
    \label{t:ADD_vs_PFA}
    \end{center}
\end{table}

\begin{figure}[!h!] \centering
\includegraphics[width=\textwidth]{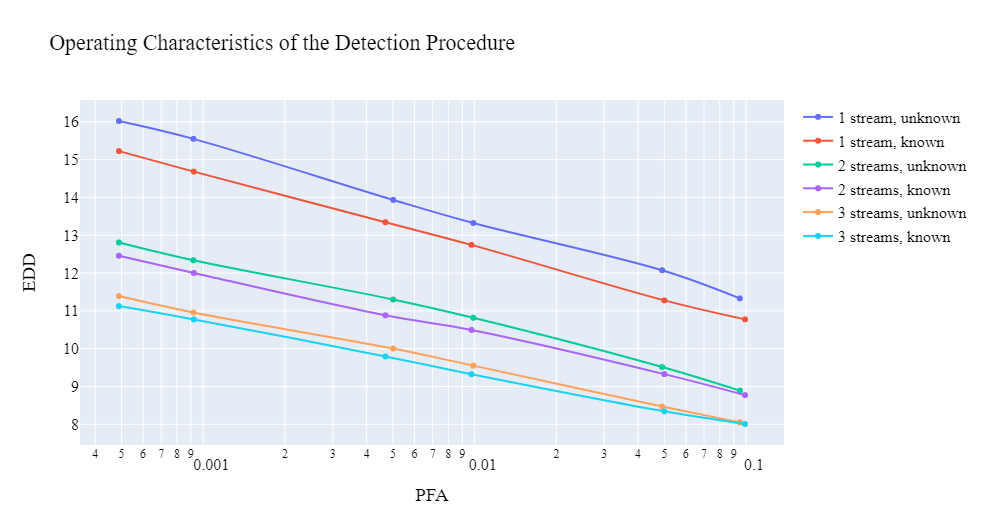}
\caption{Operating characteristics of the detection procedures for the total number of streams $N=10$ and for the number of affected streams
$m=1,2,3$: $\ADD_{\B_m,\theta}(T_A^{\pb,\theta})$ vs $\log \PFA_\pi$ and $\ADD_{\B_m,\theta}(T_A^{\pb,W})$ vs $\log \PFA_\pi$. Number of MC runs $10^6$. x-axis in logarithmic scale}  
\label{EDD_vs_PFA}
\end{figure}

\section{Applications} \label{sec:COVID} 

In this section, we consider two different applications -- rapid detection of new COVID-19 epidemic waves and extraction of tracks of low-observable near-Earth 
space objects in optical images obtained by telescopes.

\subsection{Application to COVID Detection} \label{ssec:COVID} 

We consider the problem of detecting the emergence of new COVID-19 epidemic waves in Australia based on real data. 
Note that in \cite{pergamenchtchikov2022minimax} the algorithm of joint disorder detection and identification has been used not only to detect the start of COVID-19 in Italy 
but also to identify a region where the outbreak occurs. Here we do not consider identification of the region in 
which the outbreak occurs. We need only to decide on the occurrence of an epidemic in the whole country based on data from various regions. 
Also, in contrast to \cite{pergamenchtchikov2022minimax} where a stationary Markov model has been used, we now propose a substantially different non-stationary 
model taking into account that a new wave of COVID typically spreads faster than a linear law. This fact has been recently noticed in  \cite{LiangTarVeerIEEEIT2023}.

We selected data on COVID in Australia, which has 8 regions: Australian Capital Territory, New South Wales, Northern Territory, Queensland, 
South Australia, Tasmania, Victoria, Western Australia. We propose to use the non-stationary model \eqref{MeanARmodel} considered in Subsection~\ref{ssec:Ex1}
with super-linear functions $S_n(i)= C_i \, n^{\gamma}$, $\gamma>1$. 
As an observation, we use the percentage of infections in the total population. We study the COVID outbreak in Australia, 
which was recorded in the winter of 2021-2022. The data are taken from the World Health Organization and presented in Fig.~\ref{Dist_COVID}.

\begin{figure}[htb] \centering
\includegraphics[width=\textwidth]{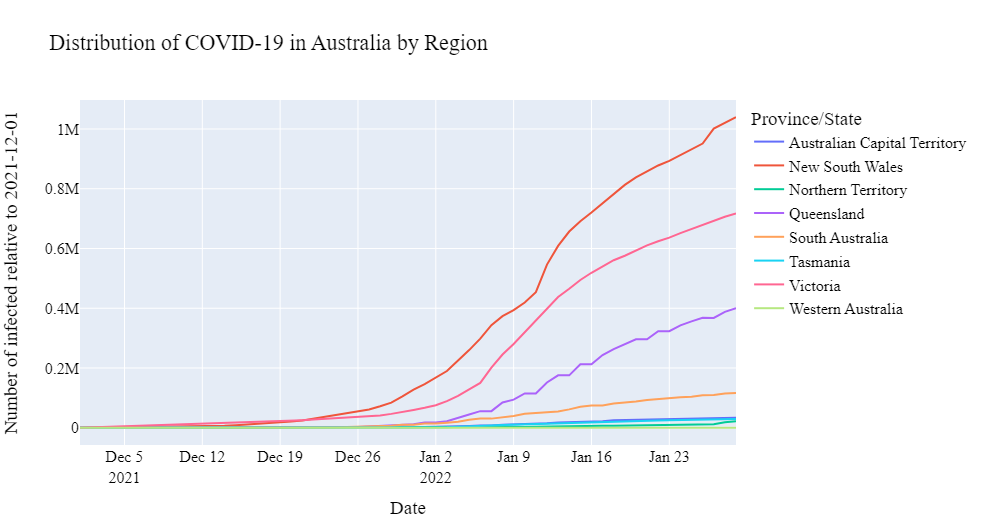}
\caption{Distribution of COVID-19 in Australia by region.}  
\label{Dist_COVID}
\end{figure}

A performed statistical analysis shows that the model with parameters $\theta_1=0.1$ and $\theta_2=0.08$ for New South Wales and Victoria, respectively, 
describes the beginning of the pandemic outbreak well. 
We also selected $S_n(i) = n^{1.127}$ and $\xi_n(i)=w_n(i)\sim \Nc(0,\sigma_i^2)$ with $\sigma_i = 2.4$ in \eqref{MeanARmodel}. We apply the $8$-stream 
double mixture detection algorithm discussed above when the 
parameter of the post-change distribution is not known. We used the discrete uniform prior $W(\theta)$ on $[0.08, \dots, 0.11]$ with step $0.01$. 
The proposed change detection algorithm with a threshold selected so that the probability of false detection does not exceed 0.01 (average detection delay is about 10) 
decides on the outbreak of the epidemic on January 9, 2022 (see Fig.~\ref{Detection_stat}). 

The plots in Fig.~\ref{Detection_stat} illustrate the behavior of the change detection statistic $R_{\pb,W}(n)$ defined in \eqref{DMSR_stat}. 
The COVID wave is detected at the moment when the statistic $R_{\pb,W}(n)$ crosses the threshold.

\begin{figure}[!ht] \centering
\includegraphics[width=\textwidth]{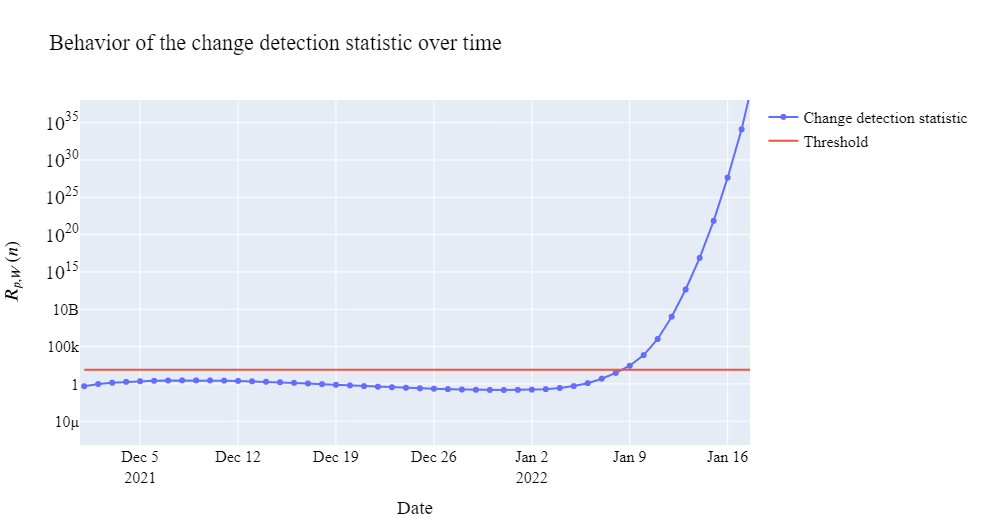}
\caption{Behavior of the change detection statistic $R_{\pb,W}(n)$ over time.}  
\label{Detection_stat}
\end{figure}

The obtained results complement the existing work on the application of changepoint detection algorithms to epidemic detection problems (see, e.g.,
\cite{Baron2004,LiangTarVeerIEEEIT2023,pergamenchtchikov2022minimax,Baron2013}). 
The results show that the proposed mixture-based  change detection algorithm can be useful for governments when deciding whether to impose a total lockdown 
across the country without regard to a region in the early stages of a pandemic.

\subsection{Application to Detection and Extraction of Faint Space Objects} \label{ssec:Space}

The problem of rapid detection and extraction of streaks of low-observable space objects with unknown orbits in optical images captured with ground-based telescopes is a challenge for Space Informatics. A typical image (digital frame) with a low-contrast streak with the signal-to-noise (SNR) 
in pixel approximately $1$ is shown in Fig.~\ref{fig:fit_track}.
%

\begin{figure}[!h!] 
\centering
	\includegraphics[height=0.4\textheight, width = 0.8\textwidth]{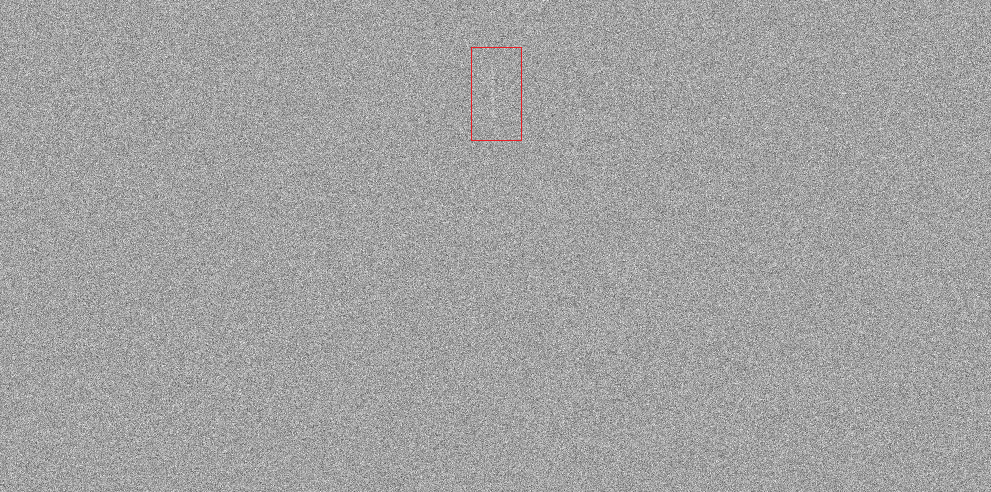}
	\caption{Digital image with a low-SNR streak. The red rectangle marks the streak position.}
	\label{fig:fit_track}
\end{figure}

Since the distribution of observations changes abruptly when the streak starts and ends the problem of object streak extraction can be regarded as 
 the changepoint detection problem in 2-D space (but not in time since we consider a single image).

In realistic situation, the problem is aggravated by the presence of stars and background that produce strong clutter, and 
special image preprocessing for clutter removal has to be implemented (see, e.g., \cite{Tartakovsky&Brown-IEEEAES08}). 
We assume that after appropriate preprocessing, the clutter-removed input frame contains Gaussian 
noise independent from pixel to pixel. For the sake of simplicity, we also consider a scenario where only one streak may be present in the image.
We further assume that the satellite has a linear and uniform motion in the frame. The satellite streak is given by the vector 
$\Yb=(x_0,y_0,x_1,y_1)$, where $(x_0,y_0)$ corresponds to the start point and $(x_1, y_1)$ corresponds to the endpoint. We consider the 
following model of the observation $X_{i,j}$ in pixel $(i,j)$ of the 2-D frame:
\begin{equation}\label{eq:signal_main}
X_{i,j}= \theta S_{i,j}(\Yb) + \xi_{i,j},
\end{equation}
where $\theta$ is an unknown signal intensity from the object,
$\{S_{i,j}(\Yb)\}$ are values of the model profile of the streak that are calculated beforehand assuming the point spread function (PSF) is 
Gaussian with a certain effective width, which is shown in  Fig.~\ref{fig:signal}; and
$\xi_{i,j}\sim \Nc(0,\sigma^2)$ is Gaussian noise after preprocessing with zero mean and known (estimated empirically) local variance $\sigma^2$. 
Thus, the observation $X_{i,j}$ has normal pre-change distribution $g(X_{i,j}) \sim \Nc(0,\sigma^2)$
when the streak does not cover pixel $(i,j)$ and normal post-change distribution $f(X_{i,j}) \sim \Nc(\theta S_{i,j}(\Yb),\sigma^2)$ when the streak 
covers the pixel $(i,j)$.

\begin{figure}[!h!] 
\centering
	\includegraphics[width=\textwidth]{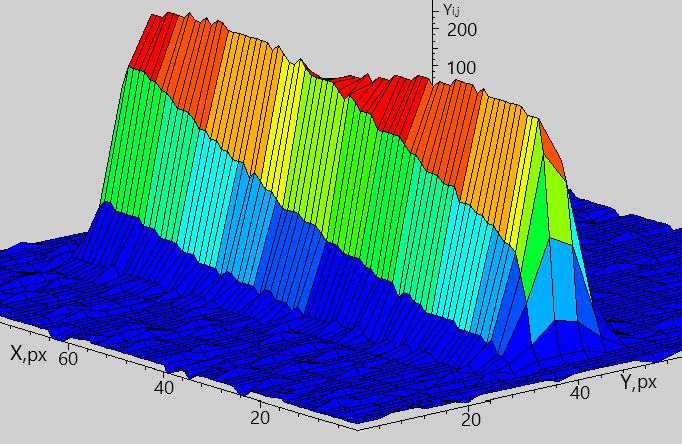}
	\caption{Model profile of the streak.} 
	\label{fig:signal}
	
\end{figure}

The problem is to detect the streak with minimal delay or to make a decision that there is no streak in the frame.

We consider only intra-frame detection of faint streaks of subequatorial satellites with unknown orbits with telescopes 
mounted at the equator. In this case, a signal from a satellite (with unknown intensity, start, and end points) is located almost vertically 
in a small area at the center of the frame, as shown in Fig.~\ref{fig:search_area}.

\begin{figure}[!h] 
\centering
	\includegraphics[width = \textwidth]{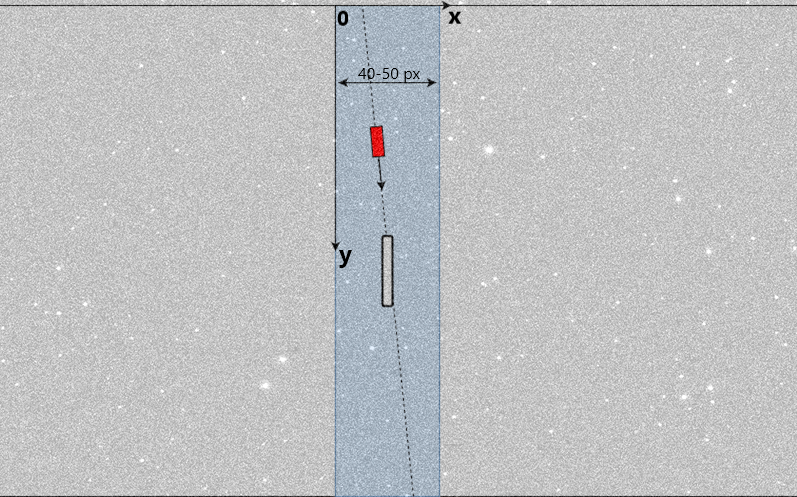}
	\caption{Search area $\Omega_S$ is rectangular. The dotted line shows one of the possible directions. White rectangle -- streak, red rectangle -- sliding window.}
	\label{fig:search_area}
\end{figure}

Let $\Omega_S$ denote the streak search area. We define $N$ different directions 
inside $\Omega_S$. Let $d\in \{1,\dots,N\}$ denote a certain direction. Since we assume that there may appear only one
satellite's streak in the frame there is no need to mix SR statistics over streak location but rather maximize. However, since the signal intensity $\theta$ 
is not known we still need to mix over the distribution of $\theta$, which is a drawback. 

A reasonable way to avoid this averaging is to use the so-called 
Finite Moving Average (FMA) statistics, as suggested in \cite{TartakovskyetalIEEESP2021}. 
Specifically,  let $M_d(n),n \ge 1$  stand for a 2-D sliding rectangular window which 
contains certain pixel numbers $(i,j)$ at each step $n$ in the direction $d$. Window $M_d(n)$ has a fixed length of $L_d$ pixels and a fixed width 
of $K_d$ pixels (the choice of the parameters depends on the expected SNR and PSF effective width).
For the certain direction $d$ ($d=1,\dots,N$), the FMA statistic is defined as
\[
V_{M_d(n)}(n) = \sum_{(i,j)\in M_d(n)}S_{i,j}^{(d)} X_{i,j}^{(d)},
\]
where $\{S_{i,j}^{(d)}\}$ are values of the Gaussian model profile in the direction $d$ and $X_{i,j}^{(d)}$ are observed data in the direction $d$. 
Profile location is given by the vector $\Yb_d$ in the direction $d$. Then the multistream\footnote{Here ``streams'' are not streams per se but rather 
data $X_{i,j}^{(d)}$ in different directions $d \in \{1,\dots,N\}$ in the search area $\Omega_S$.} FMA detection procedure is defined as
\[
T_h=\inf\left\{n \ge 1: \max_{1\le d \le N}V_{M_d(n)}(n) \ge h \right\}.
\]

For the Gaussian model, the FMA procedure is invariant to the unknown signal intensity $\theta$, which is a big advantage over the SR-type
versions.

\begin{figure}[!h!] 
\centering
	\includegraphics[width = \textwidth]{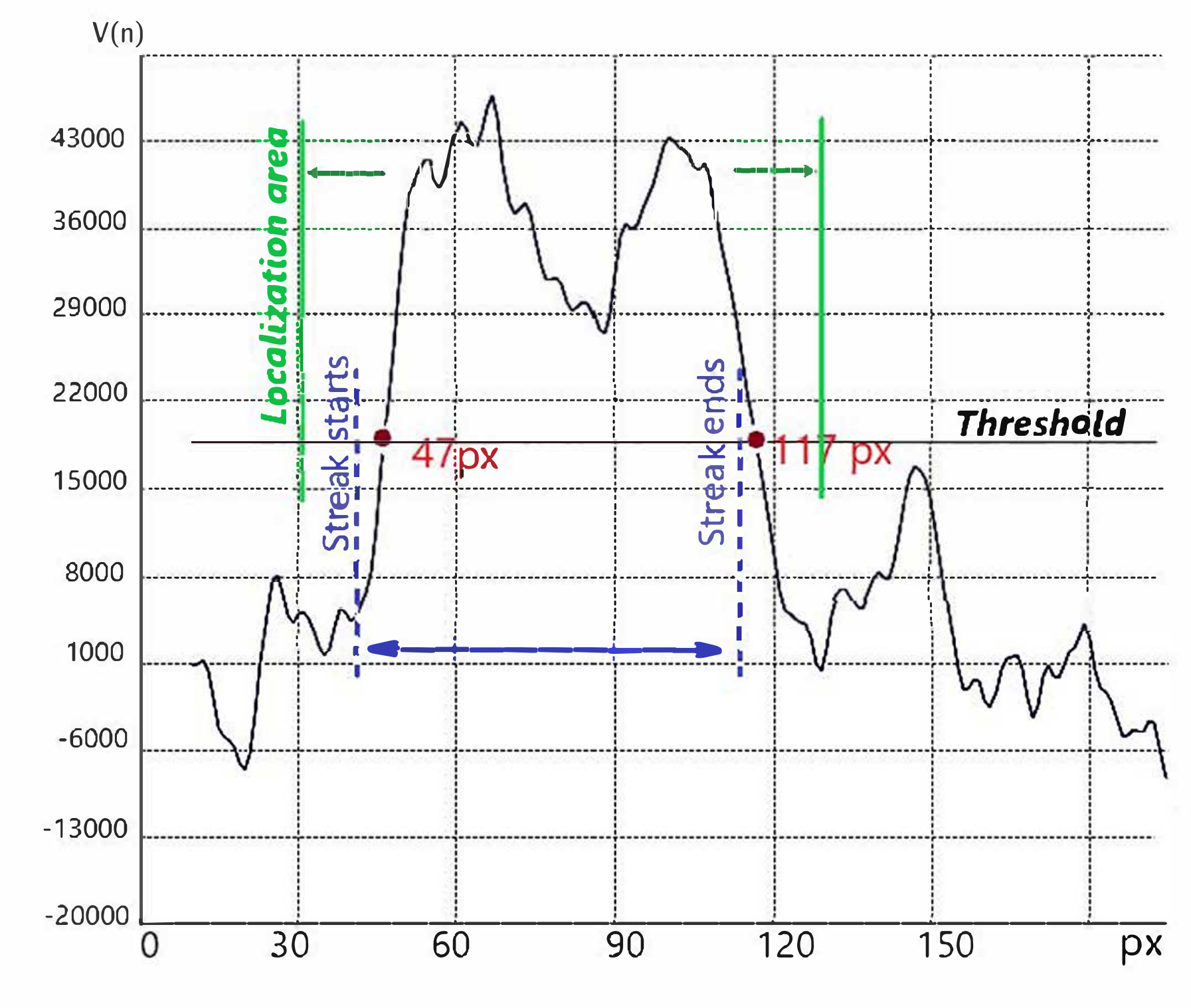}
	\caption{ The behavior of the FMA $V_{M_d(n)}(n)$ statistic along the correct direction.}
	\label{fig:plot_stat_R}
\end{figure}

Fig. \ref{fig:plot_stat_R} shows a typical behavior of the $V_{M_d(n)}(n)$ statistic along the 
direction containing the streak in the case of a very low SNR $ = 0.9$. In this case, the streak is detected with coordinates 
of start and end at points 47 and 117, respectively, while the true values are 40 and 110 so that the precision is $7$ pixels.
Experiments show that when sliding the 2-D window in various directions inside $\Omega_S$ 
and then comparing the largest value $\max_{d} V_{M_d(n)}(n)$ to a threshold we typically determine the approximate position 
of the streak with an accuracy of 5-10 pixels. Therefore, the proposed FMA version of the change detection algorithm turns out to be efficient
 -- it allows us to rapidly determine a localization area, which with a high probability contains the streak.

\section{Concluding Remarks and Future Challenges}\label{sec:Remarks} 

\subsection{Remarks}

1.  As we already discussed, for general non-i.i.d.\ models SR-type statistics cannot be computed recursively even in separate streams, so
 the computational complexity and memory requirements of the mixture detection procedures $T_A^{\pb,\theta}$ and $T_A^{\pb,W}$, 
 especially in the asymptotically non-stationary case,  can be quite high. To avoid this drawback it is reasonable to use either one-stage delayed adaptive procedures
 or window-limited versions of mixture detection procedures where the summation over potential change points $\nu=k$ is restricted to the 
 a sliding window of a fixed size $\ell$. In the window-limited versions of $T_A^{\pb,W}$, the statistic $R_{\pb,W}(n)$ is replaced by the
 window-limited statistic
\begin{align*}
\widehat{R}_{\pb,W}(n) =  \sum_{k=n-(\ell+1)}^{n-1}  \Lambda_{\pb,W}(k,n) \quad \text{for}~n > \ell; \quad \widehat{R}_{\pb,W}(n)= 
R_{\pb,W}(n)  \quad \text{for}~n \le \ell.
\end{align*}
Following guidelines of Lai~\cite{LaiIEEE98} and the techniques developed by Tartakovsky~\cite[Sec 3.10, pages 116-122]{Tartakovsky_book2020} 
for the single-stream scenario, it can be shown that the window-limited version of the SR mixture 
also has first-order asymptotic optimality properties as long as the size of the  window $\ell(A)$ approaches infinity as $A\to\infty$ with the rate
$\ell(A)/\log A \to \infty$.  Since thresholds, $A=A_\alpha$, in detection procedures should be selected in such a way that 
$\log A_\alpha \sim |\log \alpha|$ as $\alpha\to0$, the value of the window size $\ell(\alpha)$ should satisfy
$\lim_{\alpha\to0} [\ell(\alpha)/|\log \alpha|] =\infty$.

2. Similar asymptotic optimality results can be obtained for the mixture CUSUM procedure based on thresholding of the sum of generalized LR statistic
\[
W(n) = \sum_{i=1}^N \max_{ 0\le  k < n} \sup_{\theta_i\in \Theta_i} \lambda_{i,\theta_i}(k,n)
\]
and for the corresponding window-limited version.

3.  We also conjecture that asymptotic optimality properties hold for the multistream Finite Moving Average (FMA) 
detection procedure given by the stopping time
\[
T_h = \inf\set{n \ge 1: \sum_{i=1}^N \sup_{\theta_i\in \Theta_i}\sum_{k=\max(1,n-\ell+1)}^n\lambda^*_{i, \theta_i}(k) \ge h},
\]
where $\lambda^*_{i, \theta_i}(k)= \log [f_{i,\theta_i}(X_k(i)|\Xb^{k-1}(i))/g_{i}(X_k(i)|\Xb^{k-1}(i))]$. In cases where the LLR $\lambda^*_{i, \theta_i}(k)$
is a monotone function of some statistic, the FMA procedure can be appropriately modified in such a way that
it is invariant to the unknown parameters $\theta_i$. This is a great advantage over corresponding SR-based and CUSUM-based procedures.
See also discussion in Subsection~\ref{ssec:Space}.

4. The results can be easily generalized to the case where the change points $\nu_i$ are different for different streams $i=1,\dots,N$.

5. The Bayesian-type results of this paper can be used to establish asymptotic optimality properties of the detection procedures $T_A^{\pb,\theta}$
and $T_A^{\pb,W}$ in a non-Bayesian setting. In particular, the optimization problem can be solved in the class of procedures with the upper-bounded maximal local 
probability of false alarms
\[
\sup_{m \ge 0} \Pb_\infty\brc{T \le m +k|T>m} \le \alpha,
\]
both in pointwise and minimax settings, by embedding into the Bayesian class, similar to what was done by 
Pergamenchtchikov and Tartakovsky~\cite{PerTar-JMVA2019} for the case of a single stream.

\subsection{Future Challenges}

1. The results of MC simulations in Section~\ref{sec:MC} show that first-order approximations to EDD and PFA are typically not especially accurate, 
so higher-order approximations are in order. However, it is not feasible to 
obtain such approximations in the general non-i.i.d.\ case considered in this paper. Higher order approximations to the expected detection delay 
and the probability of false alarm for the i.i.d. models, assuming that the observations in streams are independent and also independent across streams, 
can be derived based on the renewal and nonlinear renewal theories. This important problem will be considered in the future.

2. The results of this paper cover the scenario where the number of streams $N$ is fixed so that $\log(N/\alpha)\sim \log(1/\alpha)$ for small $\alpha$. 
In Big Data problems, $N$ may be very large and go to infinity. These problems require different approaches and different detection procedures. 
This challenging problem will be considered in the future. 

3.  For detecting transient (or intermittent) changes of unknown duration (like object streaks in Subsection~\ref{ssec:Space}) it is often more reasonable to consider
not quickest detection criteria but reliable detection criteria that require minimization of the detection probability in a fixed time (or space) window (see, e.g., 
\cite{BerenkovTarKol_EnT2020,TartakovskyetalIEEESP2021,Tartakovsky_book2020} and references therein). While certain interesting asymptotic results 
for single-stream scenarios and i.i.d.\ data models exist
\cite{Nikiforov+et+al:2012,Nikiforov+et+al:2017,Nikiforov+et+al:2023,SokolovSpivakTartakSQA2023,TartakovskyetalIEEESP2021,Tartakovsky_book2020}, 
to the best of our knowledge this problem has never been considered in the multistream setting and for non-i.i.d.\ models. 

\begin{acknowledgement}
We are grateful to a referee for a comprehensive review and useful comments.
\end{acknowledgement}


\end{document}